\documentclass[a4paper, 11pt]{amsart}
\usepackage[margin=1in]{geometry}
\usepackage[utf8]{inputenc}
\usepackage[T1]{fontenc}
\usepackage{amssymb, amsmath, amsfonts}
\usepackage{mathtools}
\usepackage{enumitem}
\usepackage[protrusion=true,expansion=false]{microtype}
\usepackage[hidelinks]{hyperref}
\hypersetup{
  pdftitle={Variational Nonlinearities and Wave Selection at an O(2)-Hopf Bifurcation},
  pdfauthor={Taylan Şengül}
}

\newcommand{\abs}[1]{\lvert #1 \rvert}
\renewcommand{\Re}{\operatorname{Re}}
\renewcommand{\Im}{\operatorname{Im}}
\renewcommand{\det}{\operatorname{det}}
\newcommand{\tr}{\operatorname{tr}}
\newcommand{\zetaR}{\zeta_{\scriptscriptstyle R}}
\newcommand{\xiR}{\xi_{\scriptscriptstyle R}}
\newcommand{\ksym}{\mathsf k}
\newcommand{\pair}[1]{\langle\!\langle #1 \rangle\!\rangle}
\newtheorem{theorem}{Theorem}[section]
\newtheorem{proposition}[theorem]{Proposition}
\newtheorem{lemma}[theorem]{Lemma}

\newtheorem{remark}[theorem]{Remark}
\theoremstyle{definition}
\newtheorem{definition}[theorem]{Definition}
\newtheorem{assumption}[theorem]{Assumption}
\newtheorem{example}[theorem]{Example}
\theoremstyle{plain}

\title[Variational Nonlinearities and Wave Selection]{Variational Nonlinearities and Wave Selection at an $O(2)$-Hopf Bifurcation}

\author[Şengül]{Taylan Şengül}
\address{Department of Mathematics, Marmara University, 34722 Istanbul, Turkey}
\email{taylan.sengul@marmara.edu.tr}
\date{\today}
\keywords{$O(2)$-Hopf bifurcation, traveling and standing waves, wave selection, Hamiltonian structure, variational nonlinearity, $p$-system}
\subjclass[2020]{37G40, 37K05, 35B32, 37L10, 35B36, 35Q74}

\begin{document}

\begin{abstract}
At an $O(2)$-equivariant Hopf bifurcation, the real parts $\zetaR$ and $\xiR$ of the two cubic normal-form coefficients govern selection between traveling and standing waves.
In a fourth-order regularized $p$-system, $\xiR$ was observed to vanish and the cubic nonlinearity to shift only frequencies.
We show that both observations are structural: variational nonlinearities enforce them even when the linear part is dissipative and non-Hamiltonian.
We consider a class of two-component PDEs on the circle whose quadratic and cubic nonlinearities are generated by reflection-invariant local Hamiltonians of the fields and finitely many of their spatial derivatives, through a common constant-coefficient Poisson operator.
Then $\xiR=0$ for every admissible linear part, while $\zetaR$ depends only on the quadratic nonlinearity.
For variational nonlinearities the direct cubic terms are purely imaginary.
The mixed cascade term loses its real part because its second harmonic has zero temporal frequency and the resolvent keeps its parity, while the self-cascade at the doubled critical frequency keeps a real part.
When $\zetaR\ne0$, the bifurcating traveling waves are saddles, while the standing waves are supercritical and orbitally asymptotically stable for $\zetaR<0$ and subcritical and unstable for $\zetaR>0$.
When the nonlinearities are the Poisson operator applied to polynomials in the fields alone, and its symbol does not vanish at the critical and second-harmonic wavenumbers, cancellation for every admissible linear part conversely characterizes variationality.
In a dissipatively regularized Boussinesq family, as dissipation tends to zero, $\zetaR$ vanishes linearly off resonance and diverges inversely at the $2{:}1$ resonance.
The Hamiltonian limit of the cubic coefficient is therefore singular.
\end{abstract}

\maketitle

\section{Introduction}

At an $O(2)$-equivariant Hopf bifurcation on the circle, the competing bifurcating states are waves that travel around the circle and waves that stand on it.
The choice is decided by the real parts $\zetaR$ and $\xiR$, generically independent~\cite{vangils1986hopf,crawford1991symmetry,crawford1988degenerate}, of the cubic coefficients of the $O(2)$-Hopf normal form~\eqref{reduced_system}.
Yao~\cite{yao20142} and Li and Yao~\cite{liyao2015} computed these coefficients for a fourth-order regularized $p$-system modeling cellular shock instability.
Their regularization supplies a dissipative, non-Hamiltonian linear part, while the $p$-system $u_t=v_x$, $v_t=\sigma(u)_x$ itself remains a Hamiltonian vector field.
They found $\xiR=0$ with $\zetaR\ne0$.
Li and Yao also noted, as surprising, that the cubic nonlinearity arising from the pressure law contributes only to the imaginary parts, shifting frequencies without affecting selection.
The closed-form coefficients derived in~\cite{ozersengultiryakioglu2026} for a broad class of two-component conservative systems showed the same pattern recurring well beyond the $p$-system, and the present paper grew out of that observation.

Theorem~\ref{thm:hamiltonian_flux} explains both.
It concerns the $O(2)$-equivariant system
\begin{equation}\label{main}
\partial_tU=L_\lambda U+G(U),
\qquad U=(u,v)^T,
\end{equation}
on the torus $\mathbb T=\mathbb R/2\pi\mathbb Z$ for mean-zero $U$, with $G_2$ and $G_3$ the quadratic and cubic parts of $G$.
Suppose the nonlinearities are $G_j=S\mathcal K\,\delta\mathsf H_{j+1}/\delta U$ for $j=2,3$, where $S=\begin{psmallmatrix}0&1\\1&0\end{psmallmatrix}$ exchanges the two components of $U$, which have opposite parity under the reflection, $\mathcal K=\partial_xP(-\partial_x^2)$ for a nonzero real polynomial $P$, and $\mathsf H_{j+1}$ is an arbitrary real reflection-invariant local Hamiltonian of degree $j+1$ in the sense of~\eqref{eq:local_hamiltonian}.
Suppose also that the linear part $L_\lambda$ is admissible in the sense of Section~\ref{sec:setting}.
Then $\xiR=0$, while $\zetaR$ is generically nonzero and comes from the quadratic nonlinearity alone.
When $\zetaR\ne0$, the traveling waves are saddles, while the standing waves are supercritical and orbitally asymptotically stable for $\zetaR<0$ and subcritical and unstable for $\zetaR>0$.
The models of~\cite{yao20142,liyao2015} become special cases of this theory, with $\mathcal K=\partial_x$.

The proof is a vertex calculus built on the extraction step of~\cite{ozersengultiryakioglu2026}.
Each cubic coefficient of the normal form is the sum of a direct term, obtained by evaluating $G_3$ on critical modes, and a cascade term, in which $G_2$ acts on a critical mode and a second harmonic that it generated.
Variationality makes both direct terms purely imaginary.
The zero temporal frequency of the second harmonic behind $\xi$ preserves the required resolvent parity and removes the cascade real part of $\xi$, whereas the $2\omega_c$ shift behind $\zeta$ breaks that parity and allows the cascade real part of $\zeta$ to survive.

Without variationality, equivariance and conservation alone do not produce the cancellation, and $\xiR=0$ is a single real condition on the nonlinearity.
Outside the variational class the two real parts are separate and model-dependent, as in reaction--diffusion~\cite{yaoliuwang2019} and delayed optical~\cite{budzinskiyrazgulin2017} systems, and the selection can even reverse: in a nonreciprocal two-field Swift--Hohenberg model the traveling waves are stable and the standing waves unstable at onset~\cite{tateyama2026higher}.
Vanishing of $\xiR$ at a single $L_\lambda$ can be accidental.
For derivative-free fluxes, $G_j=\mathcal KF_j$ with $F_j$ a polynomial in $U$ alone, Theorems~\ref{thm:hamiltonian_flux} and~\ref{thm:converse} give, when $P(m_c^2)P(4m_c^2)\ne0$ for the critical wavenumber $m_c$, and so always for $\mathcal K=\partial_x$,
\[
F_2,\,F_3\ \text{variational}
\quad\Longleftrightarrow\quad
\xiR=0\ \text{for every admissible }L_\lambda .
\]
Without this nondegeneracy the converse fails (Proposition~\ref{prop:degeneracies}).
For derivative-dependent fluxes only the forward implication survives: Example~\ref{ex:null_form} gives a non-variational flux that does not alter the cubic normal-form coefficients, so $\xiR=0$ holds for every admissible $L_\lambda$ although the flux is not variational.
See Problem~\ref{prob:kernel} of Section~\ref{sec:open}.

The perturbed-Hamiltonian families of Section~\ref{sec:boussinesq} are a PDE instance of Hamiltonian systems broken only in their linear part.
Two other subclasses of that class that are not covered in this paper are the damped gyroscopic ODE systems of dissipation-induced instability~\cite{bkmr1994,krechetnikovmarsden2007}, where damping destabilizes through two frequencies of opposite Krein signature, and the weakly damped parametrically forced waves of~\cite{martelknoblochvega2000}, where the forcing couples the counterpropagating amplitudes and so favors standing waves~\cite{rieckecrawfordknobloch1988}.
In both, the mechanism is linear, whereas here it is cubic.
Knobloch, Mahalov and Marsden identified the nondissipative limit of the $O(2)$-Hopf amplitude equations with a Hamiltonian normal form in which both wave amplitudes are conserved and only their frequencies shift~\cite{knoblochmahalovmarsden1994}.
They cautioned that in the Hamiltonian limit of the Faraday problem the number of participating eigenvalues doubles.
In the weakly damped Faraday problem every nonresonant real cubic coefficient is of the order of the damping~\cite{portersilber2002}.
For the perturbed-Hamiltonian families the cross-coupling coefficient has no real part at any damping, and in the formal nonresonant Hamiltonian limit the self-coupling coefficient loses its real part as well (Remark~\ref{rem:hamiltonian_limit}), so dissipation is the sole source of $\zetaR$ and of the saturation strength $\abs{\zetaR}$ it sets.
Along the perturbed-Hamiltonian Boussinesq family of Section~\ref{sec:boussinesq}, $\zetaR<0$ throughout, and the saturation strength is of the order of the damping off the $2{:}1$ resonance and of its inverse on it.

\section{Setting, hypotheses, and reduction}\label{sec:setting}

We work with~\eqref{main} in the mean-zero Sobolev scale $\mathcal H^s=\dot H^s_{\mathrm{per}}(0,2\pi)^2$.
We write $U_1=u$ and $U_2=v$ for the components of $U$.
The complexified $L^2$ pairing is $\langle U,V\rangle_{L^2}=\int_0^{2\pi}\langle U(x),V(x)\rangle_{\mathbb C^2}\,dx$, where $\langle z,w\rangle_{\mathbb C^2}=z\mathbin{\cdot}\overline w$ is the Hermitian pairing on $\mathbb C^2$.
The group $O(2)$ acts by translations $T_h$ and by the reflection
\begin{equation}\label{eq:reflection_action}
R\begin{bmatrix}u(x)\\v(x)\end{bmatrix}
=\begin{bmatrix}u(-x)\\-v(-x)\end{bmatrix}.
\end{equation}
We have $(RU)(x)=R_0U(-x)$ where $R_0=\operatorname{diag}(\varepsilon_1,\varepsilon_2)$ and
\begin{equation}\label{eq:reflection_signs}
\varepsilon_1=1,
\quad
\varepsilon_2=-1.
\end{equation}
The swap matrix $S=\begin{psmallmatrix}0&1\\1&0\end{psmallmatrix}$ anticommutes with $R_0$ and is the Poisson matrix of the variational nonlinearities introduced below.

Fix a critical parameter $\lambda_c\in\mathbb R$ and a critical wavenumber $m_c\in\mathbb Z_{\ge1}$.

\begin{assumption}[Linear part and parabolic structure]\phantomsection\label{ass:D}
For $\lambda$ near $\lambda_c$,
\[
(L_\lambda)_{ab}=\sum_{k=0}^{2m_L}c^{ab}_k(\lambda)\,\partial_x^k,
\qquad
a,b\in\{1,2\},
\]
with real coefficients $c^{ab}_k$ depending smoothly on $\lambda$, and $m_L\ge1$ least such, together with the following:
\begin{enumerate}[label=(D\arabic*), ref=(D\arabic*)]
\item\label{E} $L_\lambda$ is $O(2)$-equivariant.
\item\label{D} Diagonal and nondegenerate principal part: $c^{11}_{2m_L}c^{22}_{2m_L}\ne0$ and $c^{12}_{2m_L}=c^{21}_{2m_L}=0$.
\item\label{P} $r_\lambda\le2m_L-1$, where $r_\lambda=\max\{k:\partial_\lambda c^{ab}_k\not\equiv0\text{ near }\lambda_c\text{ for some }a,b\}$.
\end{enumerate}
\end{assumption}

Items~\ref{D} and~\ref{P} enter only through the center-manifold reduction of Proposition~\ref{thm:reduction}, where they are the hypotheses of~\cite{ozersengultiryakioglu2026}.

We write $M_m$ for the Fourier symbol of $L_\lambda$ at mode $m$, obtained by $\partial_x\mapsto im$.
By Assumption~\ref{ass:D}\ref{E} and the reality of the coefficients it satisfies the parity
\begin{equation}\label{eq:symbol_parity}
R_0M_mR_0=M_{-m}=\overline{M_m},
\end{equation}
which is in turn equivalent to~\ref{E}.

\begin{assumption}[Structure and differential order]\phantomsection\label{ass:S}
The nonlinearity $G=(g_1,g_2)^T$ has real coefficients and satisfies $G(0)=DG(0)=0$, together with the following:
\begin{enumerate}[label=(S\arabic*), ref=(S\arabic*)]
\item\label{S1} $G$ is a differential polynomial.
\item\label{S2} $G$ is $O(2)$-equivariant.
\item\label{S3} $G$ is conservative, $G=\partial_xf$.
\item\label{S4} $r_G\le2m_L-1$ holds, where $r_G$ is the highest derivative order appearing in $G$, with $r_G=0$ if $G\equiv0$.
\end{enumerate}
\end{assumption}

We write $G=G_2+G_3+\cdots$ where
\begin{equation}\label{eq:multilinear_maps}
G_2(U)=\tfrac12D^2G(0)(U,U),
\qquad
G_3(U)=\tfrac16D^3G(0)(U,U,U).
\end{equation}

\begin{assumption}[Simple $O(2)$-Hopf crossing]\phantomsection\label{ass:H}\leavevmode
\begin{enumerate}[label=(H\arabic*), ref=(H\arabic*)]
\item\label{H1} $\tr M_{m_c}(\lambda_c)=0$ and $\det M_m(\lambda_c)>0$ for all $m\ne0$.
\item\label{H2} Transversality: $\frac{d}{d\lambda}\tr M_{m_c}(\lambda_c)\ne0$.
\item\label{H3} $\tr M_m(\lambda_c)<0$ for $m\in\mathbb Z\setminus\{0,\pm m_c\}$.
\item\label{H4} Spectral gap: $\liminf_{|m|\to\infty}\det M_m(\lambda_c)/\abs{\tr M_m(\lambda_c)}>0$.
\end{enumerate}
\end{assumption}

\begin{definition}
A pair $(L_\lambda,G)$ is \emph{admissible}, and $L_\lambda$ is \emph{admissible for $G$}, if Assumptions~\ref{ass:D},~\ref{ass:S} and~\ref{ass:H} hold.
We say simply that $L_\lambda$ is \emph{admissible} when $G$ is clear from context.
\end{definition}

\begin{proposition}[Existence of admissible linear parts]\label{prop:witness}
For every $G$ satisfying Assumption~\ref{ass:S}\ref{S1}--\ref{S3} and every $m_c\ge1$ there is an $L_\lambda$ admissible for $G$ with critical wavenumber $m_c$.
One is $L_\lambda=\tau_\lambda I+S\partial_x$ with $\tau_\lambda=\lambda-\nu(-\partial_x^2-m_c^2)^N$, $\nu>0$ and $N$ even with $2N-1\ge r_G$.
\end{proposition}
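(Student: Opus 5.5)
The plan is to verify Assumptions~\ref{ass:D}, \ref{ass:S}\ref{S4} and~\ref{ass:H} directly for the proposed $L_\lambda=\tau_\lambda I+S\partial_x$, with $\lambda_c=0$, working entirely with Fourier symbols. Items \ref{S1}--\ref{S3} are hypotheses on $G$ alone and are unaffected by the choice of $L_\lambda$. First I fix $N$: take $N$ even with $N\ge2$ and $2N-1\ge r_G$, for instance the smallest even integer $\ge\max\{2,(r_G+1)/2\}$. Substituting $\partial_x\mapsto im$ gives
\[
M_m(\lambda)=\tau_m(\lambda)I+imS,\qquad \tau_m(\lambda)=\lambda-\nu(m^2-m_c^2)^N .
\]
Hence $\tr M_m=2\tau_m$ and $\det M_m=\tau_m^2-(im)^2=\tau_m^2+m^2$.

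Next I check Assumption~\ref{ass:D}. Expanding $(-\partial_x^2-m_c^2)^N$ shows that $L_\lambda$ has real constant coefficients with diagonal entries of order exactly $2N$. The leading coefficient is $-\nu(-1)^N=-\nu\ne0$. The off-diagonal entries are $\partial_x$, of order $1<2N$. Thus $m_L=N$ and \ref{D} holds. Since $\lambda$ enters only through the zeroth-order term, $r_\lambda=0\le2N-1$, which is \ref{P}. For \ref{E} I use the equivalent parity~\eqref{eq:symbol_parity}. Since $R_0SR_0=-S$,
\[
R_0M_mR_0=\tau_mI-imS=M_{-m}=\overline{M_m},
\]
using that $\tau_m$ is real and even in $m$. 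Condition \ref{S4} follows from $r_G\le2N-1=2m_L-1$ by the choice of $N$.

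Finally I check Assumption~\ref{ass:H} at $\lambda_c=0$, where $\tau_m(0)=-\nu(m^2-m_c^2)^N$. For \ref{H1}, $\tau_{m_c}(0)=0$ gives zero trace, and $\det M_m=\tau_m^2+m^2\ge m^2>0$ for $m\ne0$. For \ref{H2}, $\frac{d}{d\lambda}\tr M_{m_c}=2\ne0$. The step that actually uses a specific choice is \ref{H3}. For $m\ne0,\pm m_c$ we have $m^2-m_c^2\ne0$, and because $N$ is even, $(m^2-m_c^2)^N>0$, so $\tr M_m(0)=-2\nu(m^2-m_c^2)^N<0$. With $N$ odd the modes $|m|<m_c$ would be unstable, which is why evenness is imposed. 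For \ref{H4}, for large $|m|$ we have $\tau_m\ne0$, and
\[
\frac{\det M_m}{\abs{\tr M_m}}\ge\frac{\tau_m^2}{2\abs{\tau_m}}=\frac{\abs{\tau_m}}{2}\to\infty,
\]
so the $\liminf$ is positive. All assumptions hold, so $(L_\lambda,G)$ is admissible with critical wavenumber $m_c$. I expect no real obstacle; the only points needing care are the sign/evenness of $N$ in \ref{H3} and the order bookkeeping that makes \ref{D} and \ref{S4} compatible.
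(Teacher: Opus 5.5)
Your proposal is correct and follows the paper's own proof: the paper obtains this operator as the $a=0$, $r=1$ member of Lemma~\ref{lem:witness} and verifies it in Appendix~\ref{app:converse}. That verification uses the same steps as yours. It computes $\tr M_m=2\tau_m$ and $\det M_m=\tau_m^2+m^2$, uses evenness of $N$ for \ref{H3} and the growth of $\abs{\tau_m}$ for \ref{H4}, and takes $m_L=N$ for \ref{D}, \ref{P} and \ref{S4}. Your extra requirement $N\ge2$ is harmless, since it is already forced by $N$ even and $2N-1\ge r_G\ge0$.
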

The proof is Lemma~\ref{lem:witness} of Section~\ref{sec:defects}, which exhibits a three-parameter family of such operators.
The one displayed is its member $a=0$, $r=1$.

Let $\beta_{m,n}$, $n=1,2$, be the eigenvalues of $M_m$.
For the critical modes $m=\pm m_c$ they are simple by Assumption~\ref{ass:H}\ref{H1}, and we write $\mathbf q_{m,n}$ for the corresponding eigenvectors.
Noncritical blocks may be defective and enter only through resolvents.
By Assumption~\ref{ass:H}\ref{H1} the critical eigenvalues can be labeled as $\beta_{m_c,1}(\lambda_c)=i\omega_c$ and $\beta_{m_c,2}(\lambda_c)=-i\omega_c$, where
\begin{equation}\label{eq:critical_frequency}
\omega_c=\sqrt{\det M_{m_c}(\lambda_c)}>0.
\end{equation}
The critical adjoint eigenfunctions are, for $m=\pm m_c$,
\begin{equation}\label{eq:adjoint_modes}
e_{m,n}^*(x)=e^{imx}\mathbf q_{m,n}^*,
\qquad
M_{-m}^T\mathbf q_{m,n}^*=\beta_{-m,n}\mathbf q_{m,n}^*,
\qquad
\langle\mathbf q_{m,n},\mathbf q_{m,n'}^*\rangle_{\mathbb C^2}=\frac{\delta_{nn'}}{2\pi}.
\end{equation}
With this normalization, let
\begin{equation}\label{eq:critical_projection}
\mathcal P_nF
:=\langle F,e_{m_c,n}^*\rangle_{L^2}.
\end{equation}

We write the first critical eigenvector as
\begin{equation}\label{eq:qc}
\mathbf q:=\mathbf q_{m_c,1}=(u_c,v_c)^T,
\qquad
r_c=\Re(u_c\overline{v_c}),
\qquad
\chi=\Im(u_c\overline{v_c}).
\end{equation}

We may choose the second critical eigenvector as
\begin{equation}\label{eq:critical_phase_convention}
\mathbf q_B:=\mathbf q_{m_c,2}
=-R_0\overline{\mathbf q}
=(-\overline{u_c},\overline{v_c})^T.
\end{equation}
We also choose $\mathbf q_{-m_c,n}=\overline{\mathbf q_{m_c,n}}$.
By independence, $\det[\mathbf q_{m_c,1},\mathbf q_{m_c,2}]=2r_c\ne0$, and the biorthogonality in~\eqref{eq:adjoint_modes} gives
\begin{equation}\label{eq:ham_adjoint}
\mathbf q_{m_c,1}^*=\frac{S\mathbf q_{m_c,1}}{4\pi r_c}.
\end{equation}

Under Assumption~\ref{ass:H} the center subspace at $\lambda_c$ is four-dimensional, parametrized by the critical amplitudes $A=\mathcal P_1U$ and $B=\mathcal P_2U$,
\begin{equation}\label{eq:center_linear_parametrization}
\psi_c(A,B)
=Ae^{im_cx}\mathbf q+Be^{im_cx}\mathbf q_B
+\overline A e^{-im_cx}\overline{\mathbf q}
+\overline B e^{-im_cx}\overline{\mathbf q_B}.
\end{equation}

\begin{proposition}[Center-manifold reduction]
\label{thm:reduction}
Fix an integer $k\ge5$.
For an admissible pair $(L_\lambda,G)$ there exist, in a neighborhood of the origin in $\mathcal H^{2m_L}$ and for $\lambda$ in a neighborhood of $\lambda_c$ depending on $k$:
\begin{enumerate}[label=(\alph*)]
\item a locally invariant, $O(2)$-equivariant four-dimensional center manifold of class $C^k$, which attracts exponentially every solution that remains in that neighborhood;
\item\label{item:normal_form} a near-identity transformation, with reduced dynamics governed by the normal-form equations
\begin{equation}\label{reduced_system}
\begin{aligned}
\dot Z_1&=\beta_{m_c,1}(\lambda)Z_1+Z_1\bigl(\zeta\abs{Z_1}^2+\xi\abs{Z_2}^2\bigr)+O(\abs{\lambda-\lambda_c}\abs Z^3+\abs Z^5),\\
\dot Z_2&=\beta_{m_c,2}(\lambda)Z_2+Z_2\bigl(\bar\xi\abs{Z_1}^2+\bar\zeta\abs{Z_2}^2\bigr)+O(\abs{\lambda-\lambda_c}\abs Z^3+\abs Z^5),
\end{aligned}
\end{equation}
\end{enumerate}
Here $\abs Z=\bigl(\abs{Z_1}^2+\abs{Z_2}^2\bigr)^{1/2}$.
The cubic coefficients split as
\begin{equation}\label{eq:coefficient_splitting}
\zeta=\hat s_{1,1}+\hat c_{1,1},
\qquad
\xi=\hat s_{1,2}+\hat c_{1,2},
\end{equation}
with $\hat s_{1,j}$ and $\hat c_{1,j}$ given by~\eqref{eq:direct_coefficient_extraction}--\eqref{eq:cascade_coefficient_extraction}, all evaluated at $\lambda=\lambda_c$.
\end{proposition}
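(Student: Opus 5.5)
The plan is to assemble the result from standard center-manifold theory for parabolic-type evolution equations, with the analytic hypotheses supplied by Assumptions~\ref{ass:D}, \ref{ass:S} and \ref{ass:H}, and then to apply equivariant normal-form theory for $O(2)$-Hopf bifurcation.

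\textbf{Step 1: analytic setting.} Conditions~\ref{D} and~\ref{H4} make $L_\lambda$ sectorial on $\mathcal H^0$ with domain $\mathcal H^{2m_L}$. The principal part is diagonal, of order $2m_L$, and has nonvanishing coefficients. Condition~\ref{H4}, together with the trace conditions, then forces the principal symbols to be dissipative, since otherwise $\tr M_m$ could not stay negative for large $|m|$. Conditions~\ref{S4} and~\ref{P} make $G$ and the $\lambda$-dependent part of $L_\lambda$ lower order, namely of order at most $2m_L-1$. Hence they are smooth maps $\mathcal H^{2m_L}\to\mathcal H^{1}\subset\mathcal H^0$, and $G$ is polynomial, so it is $C^\infty$ by~\ref{S1}.

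\textbf{Step 2: spectral splitting.} Assumption~\ref{ass:H} gives exactly four eigenvalues $\pm i\omega_c$, each of multiplicity two (from $m=\pm m_c$), on the imaginary axis. All other eigenvalues satisfy $\Re\beta<0$, because $\det>0$ and $\tr<0$ for each $2\times2$ block. Condition~\ref{H4} gives a uniform gap as $|m|\to\infty$, and the mean-zero restriction removes $m=0$.

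\textbf{Step 3: center manifold.} Append $\dot\lambda=0$ and apply the parameter-dependent center-manifold theorem for semilinear parabolic problems, in the form of Vanderbauwhede--Iooss or Haragus--Iooss. This yields a local $C^k$ center manifold that is equivariant under $O(2)$, by~\ref{E} and~\ref{S2}, and exponentially attracting, because the stable part is uniformly hyperbolic. This proves part (a). Exactly this argument is what the cited work~\cite{ozersengultiryakioglu2026} carries out under the same hypotheses, so I would invoke it.

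\textbf{Step 4: normal form.} In the amplitudes $Z_1\sim A$, $Z_2\sim B$ of~\eqref{eq:center_linear_parametrization}, the symmetries act as follows.
\begin{itemize}
\item Translation acts by $(Z_1,Z_2)\mapsto(e^{im_ch}Z_1,e^{im_ch}Z_2)$.
\item The phase convention~\eqref{eq:critical_phase_convention} makes the reflection act by $(Z_1,Z_2)\mapsto(-\overline{Z_2},-\overline{Z_1})$, up to the conjugation structure.
\item The normal-form $S^1$ symmetry $e^{i\omega_c t}$ acts by $(Z_1,Z_2)\mapsto(e^{i\theta}Z_1,e^{-i\theta}Z_2)$.
\end{itemize}
The cubic monomials invariant under $T^2$ are only $Z_1|Z_1|^2$, $Z_1|Z_2|^2$ and their $Z_2$ counterparts. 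Reflection equivariance then ties the $Z_2$ coefficients to the conjugates $\bar\zeta$ and $\bar\xi$. Quadratic terms vanish because no quadratic monomial is $T^2$-equivariant. Taylor expansion in $\lambda-\lambda_c$ gives the error term.

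\textbf{Step 5: the coefficient splitting~\eqref{eq:coefficient_splitting}.} Compute the quadratic part of the center-manifold graph by solving the homological equations at the modes $2m_c$ and $0$ with temporal frequencies $2i\omega_c$ and $0$. The solvability of these equations is where the nonresonance supplied by~\ref{H1}--\ref{H3} enters, through the invertibility of $M_{2m_c}-2i\omega_c$ and $M_{2m_c}$. Then project the cubic terms with $\mathcal P_n$. The $G_3$ contribution gives $\hat s_{1,j}$, and the $G_2$(critical, second harmonic) contribution gives $\hat c_{1,j}$.

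The main obstacle is Step 1, specifically checking sectoriality when $L_\lambda$ has off-diagonal lower-order couplings and $G$ contains derivatives up to order $2m_L-1$. I would handle this by treating these terms as relatively bounded perturbations of the diagonal principal part, with relative bound zero.
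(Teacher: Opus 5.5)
Your proposal is correct and follows essentially the same route as the paper. The reduction comes from the cited work and Haragus--Iooss in the parabolic setting, with $G$ and the $\lambda$-dependent part of $L_\lambda$ of order at most $2m_L-1$. Attraction comes from the absence of spectrum of $L_{\lambda_c}$ in the open right half-plane together with the parabolic resolvent estimate. The splitting comes from the mode-$2m_c$ homological equations and the extraction of the resonant monomials $A^2\overline A$ and $AB\overline B$, with reflection supplying the conjugate coefficients.

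Two small precisions. Mode $0$ is absent from the mean-zero phase space, so no homological equation is solved there. It is translation equivariance of the reduced system itself, not only of the normal form, that rules out quadratic terms before normalization and so leaves the resonant cubic coefficients unchanged by the near-identity transformation.
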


\begin{proof}
The reduction argument of~\cite[Thms.~3.1 and~3.2]{ozersengultiryakioglu2026} applies under admissibility, in the triple $\mathcal Z=\mathcal H^{2m_L}\hookrightarrow\mathcal Y=\mathcal H^{2m_L-r}\hookrightarrow\mathcal X=\mathcal H^0$ with $r=\max(r_G,r_\lambda)\le2m_L-1$, and gives parts~(a) and~\ref{item:normal_form} except for the local attraction.
The splitting~\eqref{eq:coefficient_splitting} is derived in~\S\ref{subsec:extraction}.
Attraction follows from that verification with~\ref{H1} and~\ref{H3}, which put no spectrum of $L_{\lambda_c}$ in the open right half-plane.
The resolvent estimate of~\cite[\S6.1]{ozersengultiryakioglu2026} for Hypothesis~2.15 of~\cite{haragus2010local} then gives Hypothesis~3.20 there by~\cite[Rem.~B.2]{haragus2010local}, and~\cite[Thm.~3.23]{haragus2010local} makes the center manifold locally attracting.
Since the manifold is parameter-dependent, the attraction is uniform for $\lambda$ near $\lambda_c$ and does not need the critical pair to stay in the closed left half-plane.
\end{proof}

We write $\zetaR=\Re\zeta$ and $\xiR=\Re\xi$.
These real parts govern the radial dynamics of the cubic normal form.

We write $h_2$ for the quadratic part of the center-manifold expansion and $[\mathfrak m]$ for the extraction of the total coefficient of the monomial $\mathfrak m$~\cite{knuth1994bracket}.
The \emph{direct} coefficients $\hat s_{1,1},\hat s_{1,2}$, obtained by evaluating $G_3$ on critical modes, and the \emph{cascade} coefficients $\hat c_{1,1},\hat c_{1,2}$, in which $G_2$ acts on a critical mode and a second harmonic that it generated, are then
\begin{align}
\hat s_{1,1}
&=\mathcal P_1\!\left([A^2\overline A]G_3(\psi_c)\right),
&
\hat s_{1,2}
&=\mathcal P_1\!\left([AB\overline B]G_3(\psi_c)\right),
\label{eq:direct_coefficient_extraction}\\
\hat c_{1,1}
&=\mathcal P_1\!\left([A^2\overline A]\bigl(DG_2(\psi_c)h_2\bigr)\right),
&
\hat c_{1,2}
&=\mathcal P_1\!\left([AB\overline B]\bigl(DG_2(\psi_c)h_2\bigr)\right),
\label{eq:cascade_coefficient_extraction}
\end{align}
all four being derived in~\S\ref{subsec:extraction}.
Reference~\cite{ozersengultiryakioglu2026} calls the same four quantities the self- and cross-interaction coefficients, self referring to the critical mode acting on itself and cross to its interaction with the higher modes, so that self-interaction is our direct term and cross-interaction our cascade term.
Here self- and cross-coupling are reserved for the $A$-versus-$B$ distinction and name $\zeta$ and $\xi$ themselves, as in~\cite{martelknoblochvega2000}.

We now fix the vocabulary of the main results.
A \emph{multiplier} is an operator
\[
\mathcal K=\partial_xP(-\partial_x^2)
\]
with $P$ a nonzero real polynomial.
Given a multiplier $\mathcal K$, a \emph{flux} of $G_j$, $j\ge2$, is a homogeneous differential polynomial map $F_j$ of degree $j$ with $G_j=\mathcal KF_j$.
For $G$ satisfying~\ref{S1}--\ref{S3}, each $G_j$ has a flux with $\mathcal K=\partial_x$, that is, $P\equiv1$.
A \emph{homogeneous local Hamiltonian of degree $n$} is a functional
\begin{equation}\label{eq:local_hamiltonian}
\mathsf H_n(U)
=\int_{\mathbb T}
H_n\bigl(U,\partial_xU,\ldots,\partial_x^rU\bigr)\,dx,
\end{equation}
where $r\ge0$ is an integer and $H_n$ is a real homogeneous polynomial of total degree $n$.
Such a functional is translation invariant and $R$-invariant when $\mathsf H_n(RU)=\mathsf H_n(U)$.
Its \emph{variational derivative} is the vector $\delta\mathsf H_n/\delta U=(\delta\mathsf H_n/\delta U_1,\delta\mathsf H_n/\delta U_2)^T$ defined by
\begin{equation}\label{eq:variational_derivative_definition}
D\mathsf H_n(U)[W]
=\int_{\mathbb T}
\frac{\delta\mathsf H_n}{\delta U}(U)\mathbin{\cdot}W\,dx
\qquad\text{for every smooth $2\pi$-periodic }W=(W_1,W_2)^T.
\end{equation}
We call $F_j$, and hence $G_j$, \emph{variational} if
\begin{equation}\label{eq:ham_variational_field}
F_j=S\frac{\delta\mathsf H_{j+1}}{\delta U},
\qquad
G_j=S\mathcal K\frac{\delta\mathsf H_{j+1}}{\delta U},
\end{equation}
for an $R$-invariant homogeneous local Hamiltonian $\mathsf H_{j+1}$ of degree $j+1$.
Since every nonzero constant symmetric matrix $A$ anticommuting with $R_0$ is a multiple of $S$, \eqref{eq:ham_variational_field} is, up to a scalar absorbed into $\mathcal K$, the general form of an $R$-equivariant Hamiltonian vector field with Poisson operator $A\mathcal K$ and an $R$-invariant Hamiltonian.
We call $F_j$, and hence $G_j$, \emph{derivative-free} if $F_j$ is a homogeneous polynomial map of degree $j$ in $U$ alone.

Two degeneracies of the multiplier annihilate some or all of the cubic coefficients, independently of any variational hypothesis.
\begin{proposition}[Multiplier degeneracies]\label{prop:degeneracies}
Let $(L_\lambda,G)$ be admissible and let $G_2$, $G_3$ have fluxes $F_2$, $F_3$ with the multiplier $\mathcal K=\partial_xP(-\partial_x^2)$.
\begin{enumerate}[label=(\roman*)]
\item\label{Dg1} If $P(m_c^2)=0$, then $\hat s_{1,1}=\hat s_{1,2}=\hat c_{1,1}=\hat c_{1,2}=0$, so $\zeta=\xi=0$.
\item\label{Dg2} If $P(4m_c^2)=0$, then $\hat c_{1,1}=\hat c_{1,2}=0$, so $\zeta=\hat s_{1,1}$ and $\xi=\hat s_{1,2}$.
\end{enumerate}
\end{proposition}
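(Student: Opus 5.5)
The whole argument rests on one fact: $\mathcal K$ acts on the Fourier mode $e^{imx}$ by the scalar symbol $\kappa(m)=imP(m^2)$. Both items of the statement then follow from tracking which Fourier modes enter the extraction formulas~\eqref{eq:direct_coefficient_extraction}--\eqref{eq:cascade_coefficient_extraction}. The projection $\mathcal P_1$ pairs against $e_{m_c,1}^*=e^{im_cx}\mathbf q_{m_c,1}^*$, so it sees only the $e^{im_cx}$ Fourier component of its argument. Writing $\widehat{F}(m)$ for the $m$-th Fourier coefficient (a vector in $\mathbb C^2$), we have
\[
\mathcal P_1(\mathcal K F)=2\pi\,\kappa(m_c)\,\langle \widehat F(m_c),\mathbf q^*_{m_c,1}\rangle_{\mathbb C^2}.
\]

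For item~\ref{Dg1}, all four coefficients are $\mathcal P_1$ applied to $\mathcal K$ of something. The direct terms have $G_3=\mathcal KF_3$. The cascade terms have $DG_2(\psi_c)h_2=\mathcal K\,DF_2(\psi_c)h_2$, since $\mathcal K$ is linear with constant coefficients and therefore commutes with differentiation in $U$. The monomial extraction $[\mathfrak m]$ is linear and commutes with $\mathcal K$. Hence every coefficient carries the factor $\kappa(m_c)=im_cP(m_c^2)$, which vanishes when $P(m_c^2)=0$. Note that $P(m_c^2)=0$ also makes $\mathcal K$ kill the critical modes in $G_2$ itself. This does not matter, because the factor already appears outside.

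For item~\ref{Dg2}, I would recall from \S\ref{subsec:extraction} (or rederive from the homological equation) that $h_2$ has two parts: a second-harmonic part at wavenumber $2m_c$, and a part at wavenumber $0$ which is absent because $U$ is mean-zero. Explicitly,
\[
h_2=\sum (\text{resolvent at }M_{\pm2m_c}\text{ with shift }0\text{ or }\pm2i\omega_c)\;\widehat{G_2(\psi_c)}(\pm2m_c)\,e^{\pm 2im_cx}.
\]
Here the monomials $A^2,\ AB,\ B^2$ come with $e^{2im_cx}$ and their conjugates with $e^{-2im_cx}$. Cross terms such as $A\overline B$ have wavenumber $0$ and are excluded in the mean-zero space. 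The key point is that the $\pm2m_c$ Fourier component of $G_2(\psi_c)=\mathcal KF_2(\psi_c)$ equals $\kappa(\pm2m_c)\widehat{F_2(\psi_c)}(\pm2m_c)$, and
\[
\kappa(\pm2m_c)=\pm2im_cP(4m_c^2)=0.
\]
So the quadratic forcing of the homological equation vanishes in every monomial relevant to $A^2\overline A$ and $AB\overline B$. Since the resolvents are invertible at these modes (by~\ref{H1},~\ref{H3} and nonresonance of the shifts), this gives $h_2=0$. Therefore $DG_2(\psi_c)h_2=0$, so $\hat c_{1,1}=\hat c_{1,2}=0$, and~\eqref{eq:coefficient_splitting} yields $\zeta=\hat s_{1,1}$ and $\xi=\hat s_{1,2}$.

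The main obstacle is making the structure of $h_2$ precise without having \S\ref{subsec:extraction} in hand. Two things have to be checked. First, $h_2$ is exactly the resolvent of $L_{\lambda_c}$ (minus the appropriate temporal shift) applied to the spectral complement of $G_2(\psi_c)$. Second, no mode-$0$ or critical-mode contribution enters; on the critical modes the complement projection removes such terms, and $G_2$ quadratic in modes $\pm m_c$ produces only wavenumbers $0$ and $\pm2m_c$ anyway. Once this is granted, both items are immediate consequences of the scalar action of the multiplier on Fourier modes.
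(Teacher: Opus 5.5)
Your proposal is correct and follows the paper's proof. In (i), every coefficient is $\mathcal P_1$ of $\mathcal K$ applied to a field, so the projection identity~\eqref{eq:swap_cancellation} extracts the factor $\ksym(m_c)$. In (ii), the mode-$2m_c$ forcings carry $i\ksym(2m_c)=2im_cP(4m_c^2)$, so the invertible homological equations force the second harmonics to vanish. The only cosmetic difference is that you conclude $h_2=0$ altogether, using the mean-zero vanishing of the mode-$0$ part, whereas the paper needs only $\Phi_{AA}=\Phi_{AB}=0$.
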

The proposition is proved in~\S\ref{sec:vertex}.

\begin{remark}\label{rem:poisson}
The constant-coefficient skew-adjoint operator $S\mathcal K$ is a possibly degenerate Poisson operator~\cite[Ch.~7]{olver1993applications}, and for $\mathcal K=\partial_x$ it is a flat bracket of Dubrovin and Novikov~\cite{dubrovinnovikov1989}.
\end{remark}

\section{Main results}\label{sec:main}

\begin{theorem}[Cancellation for variational nonlinearities]
\label{thm:hamiltonian_flux}
Suppose $(L_\lambda,G)$ is admissible and $G_2$, $G_3$ are variational for a common multiplier $\mathcal K$.
Then:
\begin{enumerate}[label=(\roman*)]
\item\label{T1} Both direct coefficients are purely imaginary,
\[
\Re\hat s_{1,1}=\Re\hat s_{1,2}=0.
\]
\item\label{T2} The mixed cascade coefficient is purely imaginary,
\[
\Re\hat c_{1,2}=0.
\]
\item\label{T3} Consequently
\[
\xiR=0,
\qquad
\zetaR=\Re\hat c_{1,1},
\]
and if $P(m_c^2)P(4m_c^2)=0$ then also $\zetaR=0$.
\item\label{T4} Let $\mu=\Re\beta_{m_c,1}(\lambda)$, so that the trivial solution is stable for $\mu<0$ and unstable for $\mu>0$, and suppose $\zetaR\ne0$.
For small nonzero $\mu$, both the traveling-wave and standing-wave branches exist exactly for $\mu\zetaR<0$.
In that case the traveling waves are saddles, while the standing wave is supercritical and orbitally asymptotically stable in $\mathcal H^{2m_L}$ for $\zetaR<0$, subcritical and unstable for $\zetaR>0$.
\end{enumerate}
\end{theorem}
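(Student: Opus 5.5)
The plan is to work directly with the extraction formulas~\eqref{eq:direct_coefficient_extraction}--\eqref{eq:cascade_coefficient_extraction} and to move the Poisson operator $S\mathcal K$ onto the adjoint mode. By~\eqref{eq:ham_adjoint}, $e^*_{m_c,1}=e^{im_cx}S\mathbf q/(4\pi r_c)$. Since $S^2=I$, $S$ is symmetric and $\mathcal K$ is skew-adjoint with $\mathcal K e^{im_cx}=im_cP(m_c^2)e^{im_cx}$, every projection $\mathcal P_1(S\mathcal K\,\Phi)$ of a real-structured field $\Phi$ reduces to
\[
\frac{\pm i\,m_cP(m_c^2)}{4\pi r_c}\int_{\mathbb T}\Phi\cdot\overline{e^{im_cx}\mathbf q}\,dx .
\]
So each of the four coefficients is $i$ times a pairing. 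For~\ref{T1} I take $\Phi=\delta\mathsf H_4/\delta U(\psi_c)$. By~\eqref{eq:variational_derivative_definition} the pairing with $\overline{e^{im_cx}\mathbf q}=\partial\psi_c/\partial\overline A$ equals $\partial_{\overline A}\mathsf H_4(\psi_c)$. Because $\mathsf H_4$ is real, $\mathsf H_4(\psi_c)$ is a real polynomial in $(A,B,\overline A,\overline B)$. Hence the coefficients of the self-conjugate monomials $A^2\overline A^2$ and $AB\overline A\overline B$ are real. The extractions $[A^2\overline A]$ and $[AB\overline B]$ of $\partial_{\overline A}\mathsf H_4(\psi_c)$ are then $2\times$ and $1\times$ these real numbers, so $\hat s_{1,1},\hat s_{1,2}\in i\mathbb R$.

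For~\ref{T2} I first identify which part of $h_2$ contributes to $[AB\overline B]$. Mean-zero states exclude mode $0$, and $\mathcal K$ kills it anyway. So only the pairing of the $\overline B$ mode with the $AB$ component of $h_2$ at wavenumber $2m_c$ contributes. Its temporal frequency is $i\omega_c-i\omega_c=0$, so
\[
h_{AB}=-M_{2m_c}^{-1}S\mathcal K\,\Psi_{AB}e^{2im_cx},
\]
where $\Psi_{AB}$ is the $AB$-part of $\delta\mathsf H_3/\delta U(\psi_c)$. Writing $DG_2(\psi_c)h=S\mathcal K\,D(\delta\mathsf H_3/\delta U)(\psi_c)h$ and using symmetry of the second variation of $\mathsf H_3$, I would put the cascade in the form
\[
\hat c_{1,2}=\kappa\,\overline{\Phi_1}^{\,T}\bigl(M_{2m_c}^{-1}S\bigr)\Phi_2,
\qquad
\kappa=\frac{c\,m_c^2P(m_c^2)P(4m_c^2)}{r_c},
\]
with $c$ a real constant. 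Here $\Phi_1,\Phi_2\in\mathbb C^2$ are Fourier vectors of the third variation of $\mathsf H_3$ evaluated on the pairs (target mode, $\overline B$ mode) and (A mode, B mode). Realness of $\kappa$ comes from the two factors $i$, at wavenumbers $m_c$ and $2m_c$, multiplying to a real number. The remaining step is to show that this bilinear quantity is real. I would use the symbol parity~\eqref{eq:symbol_parity}, $R_0M_{2m_c}R_0=\overline{M_{2m_c}}$, together with $R_0SR_0=-S$, $\mathbf q_B=-R_0\overline{\mathbf q}$ and $R$-invariance of $\mathsf H_3$. Together these give $\Phi_2=\pm R_0\overline{\Phi_1}$, and transposition combined with the parity of $M^{-1}S$ then shows the expression equals its own conjugate. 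This resolvent-parity step is where I expect the main difficulty: the signs from $R_0$, $S$, the phase convention~\eqref{eq:critical_phase_convention} and $M_{-m}=M_m^T$-type identities must all combine correctly. As a consistency check, for $\hat c_{1,1}$ the same computation produces $(2i\omega_c-M_{2m_c})^{-1}$, whose conjugate has $-2i\omega_c$, so the argument correctly fails there.

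Part~\ref{T3} follows from~\eqref{eq:coefficient_splitting}, \ref{T1}, \ref{T2} and Proposition~\ref{prop:degeneracies}. For~\ref{T4} I pass to polar coordinates $Z_j=r_je^{i\theta_j}$ in~\eqref{reduced_system}. Since $\xiR=0$,
\[
\dot r_j=r_j\bigl(\mu+\zetaR r_j^2\bigr)+O(\abs{\mu}\abs Z^3+\abs Z^5).
\]
Traveling waves have $r^2=-\mu/\zetaR$ and $r_2=0$, with transverse eigenvalues $-2\mu$ and $\mu$, so they are saddles. Standing waves have $r_1=r_2$, with a double eigenvalue $-2\mu$, so they are stable exactly when $\mu>0$, that is when $\zetaR<0$. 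Both branches require $\mu\zetaR<0$. The higher-order terms do not change these conclusions, because the equilibria of the amplitude system are hyperbolic transverse to the $O(2)\times S^1$ group orbit. This gives orbital asymptotic stability within the center manifold. I would then lift the result to $\mathcal H^{2m_L}$ using the exponential attraction of Proposition~\ref{thm:reduction}(a).
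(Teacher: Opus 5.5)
Your argument for (i) is correct. It is a coordinate-free version of the paper's vertex contractions: the identity $\mathcal P_1(S\mathcal K\Phi)=\frac{i\ksym(m_c)}{4\pi r_c}\int_{\mathbb T}\Phi\cdot\overline{e^{im_cx}\mathbf q}\,dx$ together with $\int_{\mathbb T}\frac{\delta\mathsf H_4}{\delta U}(\psi_c)\cdot\partial_{\overline A}\psi_c\,dx=\partial_{\overline A}\mathsf H_4(\psi_c)$ reduces everything to the reality of the coefficients of $A^2\overline A^2$ and $AB\overline A\,\overline B$ in $\mathsf H_4(\psi_c)$. Items (iii) and (iv) are handled as in the paper.

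The gap is in the key step of (ii). You correctly find that $\kappa$ is real, since $i\ksym(m_c)$, $i\ksym(2m_c)$ and the sign in $-M_{2m_c}^{-1}$ multiply to a real number. You then set out to show that $Q:=\overline{\Phi_1}^{T}M_{2m_c}^{-1}S\,\Phi_2$ is real. With $\kappa\in\mathbb R$ that would prove $\Im\hat c_{1,2}=0$, not $\Re\hat c_{1,2}=0$. The step also fails outright, because $Q$ is purely imaginary. By total symmetry and reality of $D^3\mathsf H_3(0)$, the outer vector is the complex conjugate of the inner one, so $\Phi_1=\Phi_2=:\mathbf w$ up to a real factor. $R$-invariance with $\mathbf q_B=-R_0\overline{\mathbf q}$ then gives $\overline{\mathbf w}=R_0\mathbf w$, which is the paper's $\overline V=R_0V$. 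Using $\overline{M_{2m_c}^{-1}}=R_0M_{2m_c}^{-1}R_0$ and $R_0SR_0=-S$,
\[
\overline{M_{2m_c}^{-1}S\mathbf w}=R_0M_{2m_c}^{-1}(R_0SR_0)\mathbf w=-R_0M_{2m_c}^{-1}S\mathbf w,
\]
hence $\overline Q=\mathbf w^T\,\overline{M_{2m_c}^{-1}S\mathbf w}=-(R_0\mathbf w)^TM_{2m_c}^{-1}S\mathbf w=-Q$. In components, $\mathbf w=(x,iy)^T$ with $x,y$ real, and in the notation of~\eqref{eq:inverse_symbol_parity}
\[
Q=i\bigl(\varrho_{12}x^2+(\varrho_{11}-\varrho_{22})xy+\varrho_{21}y^2\bigr).
\]
For the Boussinesq pair of Example~\ref{ex:boussinesq}(iii) one has $y=0\ne x$ and $\varrho_{12}=-2/\det M_2\ne0$, so there $Q$ is a nonzero imaginary number and your claimed reality is false.

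Your ingredients are the right ones and do prove (ii), but only once the sign you leave as $\pm$ is tracked. The anticommutation $SR_0=-R_0S$ contributes exactly one minus sign, and that sign is what makes $\hat c_{1,2}=\kappa Q$ imaginary. The paper avoids this bookkeeping by keeping $i\ksym(2m_c)$ inside $\hat g_{AB}$, where conjugating the $i$ cancels the sign from $SR_0=-R_0S$. This gives $\overline{\hat g_{AB}}=R_0\hat g_{AB}$ and $\overline{\Phi_{AB}}=R_0\Phi_{AB}$. The outer contraction is then real, and the single factor $i\ksym(m_c)$ from the projection makes $\hat c_{1,2}$ imaginary, exactly as for the direct terms.

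Finally, do not rely on $M_{-m}=M_m^T$-type identities. For a general admissible, non-Hamiltonian $L_\lambda$ no transpose symmetry of the symbol holds; it fails for the Boussinesq symbol, whose off-diagonal entries are $im$ and $im(c_0^2+\beta m^2)$. Only the conjugation parity $R_0M_mR_0=\overline{M_m}$ is available, and it is all that is needed.
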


Theorem~\ref{thm:hamiltonian_flux} is proved in~\S\ref{sec:vertex}.
Theorem~\ref{thm:converse} is a converse to it within the derivative-free class introduced in Section~\ref{sec:setting}.

\begin{theorem}[Uniform cancellation characterizes variationality]\label{thm:converse}
Suppose that~\ref{S1}--\ref{S3} of Assumption~\ref{ass:S} hold, that $G_2$, $G_3$ are derivative-free for a multiplier $\mathcal K=\partial_xP(-\partial_x^2)$, and fix an integer $m_c\ge1$ with
\begin{equation}\label{eq:multiplier_nondegeneracy}
P(m_c^2)P(4m_c^2)\ne0.
\end{equation}
Then the following are equivalent:
\begin{enumerate}[label=(\roman*)]
\item $G_2$ and $G_3$ are variational for $\mathcal K$;
\item $\xiR=0$ for every $L_\lambda$ admissible for $G$ with critical wavenumber $m_c$.
\end{enumerate}
Without~\eqref{eq:multiplier_nondegeneracy}, (ii) does not imply~(i).
\end{theorem}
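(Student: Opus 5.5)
The forward implication (i)$\Rightarrow$(ii) is immediate from Theorem~\ref{thm:hamiltonian_flux}\ref{T3}. A derivative-free variational flux is a special case of~\eqref{eq:ham_variational_field} with $r=0$, so $\xiR=0$ for every admissible $L_\lambda$. The work is the converse, and the failure statement without~\eqref{eq:multiplier_nondegeneracy}.

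For the converse, I will parametrize derivative-free fluxes and test $\xiR$ against the family of admissible linear parts from Lemma~\ref{lem:witness}. Write $F_2=(f_2,g_2)$ with quadratic polynomials in $(u,v)$. $R$-equivariance of $G_2=\mathcal K F_2$ forces the following parities, since $\mathcal K$ is odd in $x$ and $R$ flips $v$: $f_2$ is odd in $v$ and $g_2$ is even in $v$. Hence $F_2=(a\,uv,\ b\,u^2+c\,v^2)$. Variationality with $\mathsf H_3=\int(\alpha u^2v+\gamma v^3)$ means $S\nabla H_3=(\alpha u^2+3\gamma v^2,\ 2\alpha uv)$. After the swap, the single obstruction is the closedness condition $\partial_v(\text{first})=\partial_u(\text{second})$, which reads $a=2b$ in the ordering above. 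Similarly, $F_3=(a_1u^2v+a_2v^3,\ b_1u^3+b_2uv^2)$, and closedness is one linear condition, $a_1=3b_1$ after matching indices (the $v^3$ and $uv^2$ coefficients must be related by $3:1$, etc.). So non-variationality is measured by two real numbers: $d_2$ for the quadratic flux and $d_3$ for the cubic flux.

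Next I compute $\xiR$ via~\eqref{eq:coefficient_splitting} and the vertex formulas as a function of $(d_2,d_3)$ and of the data of $L_\lambda$. By Theorem~\ref{thm:hamiltonian_flux}, the variational parts contribute nothing to $\xiR$. By linearity in $G_3$ and bilinearity in $G_2$ of the cascade, I expect
\[
\xiR = P(m_c^2)\bigl(d_3\,\Phi_3(L)+d_2\,\Phi_{2}(L;F_2)\bigr).
\]
Here $\Phi_3$ is built from $r_c$ and $\chi$ of the critical eigenvector, and $\Phi_2$ involves the resolvent at modes $0$ and $2m_c$. The $2m_c$ part carries the factor $P(4m_c^2)$. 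The degeneracy factors come from Proposition~\ref{prop:degeneracies}.

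I then show that (ii) forces $d_3=d_2=0$. I vary $L_\lambda$ in the three-parameter witness family, which changes $\mathbf q$ (hence $r_c,\chi$) and the second-harmonic resolvents independently. I choose two members: one where the $d_3$-term is nonzero and the cascade $\Phi_2$ vanishes, and one that separates $d_2$. Alternatively, I can use a rescaling of $\nu$ that changes the resolvent while fixing $\mathbf q$, so that $\Phi_2$ varies and $\Phi_3$ does not. Nonvanishing of $P(m_c^2)P(4m_c^2)$ ensures the relevant coefficients are genuinely nonzero. The main obstacle is this independence argument: one must verify explicitly that the coefficient functions are not identically zero, and not proportional, across the witness family. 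I would first try to compute them on the simplest member, $L=\tau_\lambda I+aS\partial_x$ plus the off-diagonal skew perturbation, where $\mathbf q$ is explicit.

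Finally, for the failure without~\eqref{eq:multiplier_nondegeneracy}, I use Proposition~\ref{prop:degeneracies}. If $P(m_c^2)=0$, then $\xi=0$ for any $G$. So taking a non-variational $F_2$, e.g.\ $F_2=(uv,0)$, with $\mathcal K$ such that $P(m_c^2)=0$, gives (ii) without (i). If only $P(4m_c^2)=0$, then $\xi=\hat s_{1,2}$, and I take $F_3$ variational with $F_2$ non-variational; then $\xiR=\Re\hat s_{1,2}=0$ by Theorem~\ref{thm:hamiltonian_flux}\ref{T1}, applied to the cubic part alone.
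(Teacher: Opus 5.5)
Your outline matches the paper's: the forward direction from Theorem~\ref{thm:hamiltonian_flux}, defect identities tested on the witness family of Lemma~\ref{lem:witness}, and Proposition~\ref{prop:degeneracies} for the degenerate multipliers. Your last paragraph, on the failure without~\eqref{eq:multiplier_nondegeneracy}, is correct. However, the converse, which is the substance of the theorem, is not proved, and your concrete statements about the defects are wrong.

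$R$-invariance forces the density to be even in $v$. So the cubic Hamiltonians are $\int(\alpha u^3+\gamma uv^2)\,dx$, not $\int(\alpha u^2v+\gamma v^3)\,dx$; the $S\nabla H_3$ of the latter has the parity opposite to the equivariant flux you had just derived. The Helmholtz condition $\partial_uF_{j,1}=\partial_vF_{j,2}$ reads $a=2c$ for $F_2=(a\,uv,\,b\,u^2+c\,v^2)$ and $a_1=b_2$ for $F_3$. In the notation of~\eqref{eq:general_equivariant_flux} these are $g_{11}=g_{22}$ and $h_{11}=h_{22}$, not $a=2b$ and $a_1=3b_1$.

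There are two smaller omissions. The parity step needs uniqueness of the derivative-free flux, Lemma~\ref{lem:flux_uniqueness_parity}, to pass from $\mathcal K(F_2+RF_2R)=0$ to $F_2+RF_2R=0$. Reducing variationality to the pointwise Helmholtz condition needs the Euler-identity argument of Lemma~\ref{lem:defects_variational}, since $\mathsf H_{j+1}$ may a priori contain derivatives.

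Further, ``the variational parts contribute nothing'' does not follow from Theorem~\ref{thm:hamiltonian_flux} for the cascade, which is quadratic in $F_2$. Splitting $F_2$ into a variational part and a defect part produces cross terms the theorem says nothing about. The shape $\Re\hat c_{1,2}=\mathcal D_2\,\ell_L(F_2)$ can be rescued: a real quadratic form in the coefficients of $F_2$ that vanishes on the hyperplane $\mathcal D_2=0$ is divisible by $\mathcal D_2$. But this gives no information about the linear form $\ell_L$, which is exactly what the separation argument needs. Also, no resolvent at mode $0$ enters, since $h_2$ has no zero mode.

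The missing ingredient is the explicit factorization of Proposition~\ref{prop:ham_explicit_defects}: $\Re\hat s_{1,2}=2\ksym(m_c)\chi\,\mathcal D_3$ and $\Re\hat c_{1,2}=\tfrac12\ksym(m_c)\ksym(2m_c)\,\mathcal D_2E$, with $E=\varrho_{22}(g_{22}\abs{v_c}^2-g_{21}\abs{u_c}^2)-2\varrho_{21}g_{11}\chi$. It comes from evaluating the extraction formulas on the equivariant flux and solving the $AB$ homological equation with the parity form~\eqref{eq:inverse_symbol_parity} of $M_{2m_c}^{-1}$. Only with it can the separation be carried out.

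Up to the nonzero constant $2\ksym(m_c)$, the cofactor of $\mathcal D_3$ is $\chi$ alone. On the witness family, $\chi$ vanishes exactly when the witness parameter $a$ is zero. There the cascade cofactor reduces to $\varrho_{22}(g_{22}-g_{21}r^2)$, which is nonzero for suitable $r$ when $(g_{21},g_{22})\ne(0,0)$. This gives $\mathcal D_2=0$, and then any $a\ne0$ gives $\mathcal D_3=0$.

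When $F_2=(g_{11}uv,0)$, however, $E=-2\varrho_{21}g_{11}\chi$ also carries $\chi$. Both terms then vanish together at $a=0$, and the cascade term is quadratic in the defect. Your first option, a member with nonzero $\mathcal D_3$-term and vanishing cascade, does not exist in the family. The paper instead fixes $a\ne0$ and varies $\nu$: $\chi$ is $\nu$-independent while $\varrho_{21}$ is strictly monotone in $\nu$, which gives $g_{11}^2=0$.

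Your $\nu$-variation is thus the right tool. With the explicit $E$ one can check that, at fixed $a\ne0$ and suitable $r$, $E$ is nonconstant in $\nu$ unless $F_2=0$, so it would even treat both cases at once. But you have not computed the cofactors, so their nonvanishing and independence on the family remain unproved; you flag this yourself as the main obstacle.
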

The proof of Theorem~\ref{thm:converse} is given in Section~\ref{sec:defects}.
The quantifier in~(ii) is never vacuous: Proposition~\ref{prop:witness} supplies admissible linear parts with the prescribed critical wavenumber for every $G$ satisfying~\ref{S1}--\ref{S3}.
Vanishing of $\xiR$ for a single $L_\lambda$ does not imply variationality.
The bilinear family of~\cite[\S5.3, Eq.~(47)]{ozersengultiryakioglu2026} is an instance.
Example~\ref{ex:null_form} shows that the derivative-free restriction cannot be dropped.

\section{Vertex calculus and proof of the cancellation theorem}\label{sec:methods}

The cubic coefficients are computed at $\lambda=\lambda_c$, so from here on we write $M_m$ for $M_m(\lambda_c)$, displaying the parameter where it varies.

\subsection{Fourier conventions}

For a scalar or vector-valued periodic function we use
\begin{equation}\label{eq:fourier_convention}
\widehat U(k)=\frac1{2\pi}\int_0^{2\pi}U(x)e^{-ikx}\,dx,
\qquad
U(x)=\sum_{k\in\mathbb Z}\widehat U(k)e^{ikx}.
\end{equation}
For a multiplier $\mathcal K=\partial_xP(-\partial_x^2)$, write
\begin{equation}\label{eq:K_symbol}
\widehat{\mathcal Kf}(m)=i\ksym(m)\widehat f(m),
\qquad
\ksym(m)=mP(m^2)\in\mathbb R,
\qquad
\ksym(-m)=-\ksym(m).
\end{equation}

\subsection{Critical modes and temporal eigenvalues}

We write
\[
\beta_1=\beta_{m_c,1}(\lambda_c)=\overline{\beta_{m_c,2}(\lambda_c)}=i\omega_c.
\]
To each amplitude variable $X\in\{A,B,\overline A,\overline B\}$ we associate the wavenumber $m_X$ and temporal eigenvalue $\beta_X$ it carries in~\eqref{eq:center_linear_parametrization}: $m_A=m_B=m_c$, $m_{\overline A}=m_{\overline B}=-m_c$, $\beta_A=\beta_{\overline B}=\beta_1$, and $\beta_B=\beta_{\overline A}=\overline{\beta_1}$.
To a quadratic monomial $XY$ we associate $m_{XY}=m_X+m_Y$ and $\beta_{XY}=\beta_X+\beta_Y$:
\begin{equation}\label{eq:quadratic_mode_frequency_table}
\begin{array}{c|ccc|ccc}
XY&AA&AB&BB&A\overline A&A\overline B&B\overline B\\ \hline
m_{XY}&2m_c&2m_c&2m_c&0&0&0\\
\beta_{XY}&2\beta_1&0&2\overline{\beta_1}&0&2\beta_1&0
\end{array}
\end{equation}

\subsection{The quadratic center-manifold equation}

We parametrize the center manifold as
\[
U=\psi_c+h_2(A,B,\overline A,\overline B)+O(|(A,B)|^3),
\qquad \mathcal P_nh_2=0,\quad n=1,2.
\]
For a quadratic monomial $XY$, define its second-harmonic and forcing vectors by
\begin{equation}\label{eq:quadratic_forcing_definition}
\Phi_{XY}e^{im_{XY}x}
:=[XY]h_2,
\qquad
\hat g_{XY}e^{im_{XY}x}
:=[XY]G_2(\psi_c).
\end{equation}
The homological equations~\cite{haragus2010local} are
\begin{equation}\label{eq:quadratic_homological_equation}
\left(\beta_{XY}I-M_{m_{XY}}\right)\Phi_{XY}
=\hat g_{XY}.
\end{equation}
Only two of them enter the cubic extraction of~\S\ref{subsec:extraction}:
\begin{align}
\Phi_{AA}&=(2\beta_1I-M_{2m_c})^{-1}\hat g_{AA},
\label{eq:self_second_harmonic}\\
\Phi_{AB}&=(-M_{2m_c})^{-1}\hat g_{AB},
\label{eq:mixed_second_harmonic}
\end{align}
where both resolvents exist by Assumption~\ref{ass:H}.

Because the center manifold is constructed in the mean-zero phase space, $h_2$ has no zero Fourier mode, so
\begin{equation}\label{eq:zero_mode_vanishing}
\Phi_{A\overline A}=\Phi_{A\overline B}=\Phi_{B\overline B}=0.
\end{equation}

\subsection{Derivation of the cubic extraction formulas \texorpdfstring{\eqref{eq:direct_coefficient_extraction}--\eqref{eq:cascade_coefficient_extraction}}{}}\label{subsec:extraction}

We define the cubic field
\begin{equation}\label{eq:cubic_reduced_field}
\mathcal R_3(\psi_c)
=G_3(\psi_c)+DG_2(\psi_c)h_2
=\tfrac16D^3G(0)(\psi_c,\psi_c,\psi_c)+D^2G(0)(\psi_c,h_2).
\end{equation}
Extraction is linear and amplitude-independent, so it commutes with the critical projection $\mathcal P_1$ of~\eqref{eq:critical_projection} and with $S\mathcal K$,
\begin{equation}\label{eq:extraction_commutation}
\mathcal P_1([\mathfrak m]F)=[\mathfrak m]\mathcal P_1(F),
\qquad
[\mathfrak m](S\mathcal KF)=S\mathcal K([\mathfrak m]F),
\end{equation}
for every amplitude monomial $\mathfrak m$ and amplitude-polynomial field $F$.
The reduced system has no quadratic part, its cubic part is $[\mathcal P_1\mathcal R_3(\psi_c),\mathcal P_2\mathcal R_3(\psi_c)]^T$, and the near-identity transformation $(A,B)\mapsto(Z_1,Z_2)$ leaves the resonant coefficients unchanged.
The $\mathcal P_1$ resonances are $A^2\overline A$ and $AB\overline B$.
Reflection equivariance makes the $\mathcal P_2$ resonances their conjugates~\cite{vangils1986hopf,crawford1991symmetry}.
Extracting these two monomials from the two summands of~\eqref{eq:cubic_reduced_field} gives the four coefficients of~\eqref{eq:coefficient_splitting}, namely~\eqref{eq:direct_coefficient_extraction}--\eqref{eq:cascade_coefficient_extraction}~\cite{haragus2010local}.

\subsection{Vertices and reality}

For $\mathsf H_n$ as in~\eqref{eq:local_hamiltonian}, define its \emph{vertex} $T^{(n)}$ as the unique complex-multilinear extension of the symmetric real form $D^n\mathsf H_n(0)$,
\begin{equation}\label{eq:ham_vertex_multilinear}
\begin{aligned}
D^n\mathsf H_n(0)[U^{(1)},\ldots,U^{(n)}]
={}&\sum_{a_1,\ldots,a_n=1}^2
\sum_{m_1+\cdots+m_n=0}
T^{(n)}_{a_1\cdots a_n}(m_1,\ldots,m_n)
\prod_{\ell=1}^n\widehat{U^{(\ell)}}_{a_\ell}(m_\ell).
\end{aligned}
\end{equation}
We call $(a_\ell,m_\ell)$ the $\ell$th \emph{leg} of $T^{(n)}$.
Setting $U^{(1)}=\cdots=U^{(n)}=U$ and using the degree-$n$ homogeneity of $\mathsf H_n$ gives
\begin{equation}\label{eq:ham_vertex}
\mathsf H_n(U)
=\frac{1}{n!}
\sum_{a_1,\ldots,a_n=1}^2
\sum_{m_1+\cdots+m_n=0}
T^{(n)}_{a_1\cdots a_n}(m_1,\ldots,m_n)
\prod_{\ell=1}^n\widehat U_{a_\ell}(m_\ell).
\end{equation}
The \emph{Fourier-momentum conservation} constraint $m_1+\cdots+m_n=0$ follows from orthogonality.
\begin{lemma}\label{lem:hamiltonian_vertex_identities}
Let $\mathsf H_n$ be as in~\eqref{eq:local_hamiltonian}.
For the reflection identity~\eqref{eq:vertex_reflection}, assume in addition that $\mathsf H_n$ is $R$-invariant.
For $a_1,\ldots,a_n\in\{1,2\}$, $m_1,\ldots,m_n\in\mathbb Z$ with $m_1+\cdots+m_n=0$, and any permutation~$\pi\in S_n$, the vertex satisfies
\begin{align}
T^{(n)}_{a_{\pi(1)}\cdots a_{\pi(n)}}
(m_{\pi(1)},\ldots,m_{\pi(n)})
&=T^{(n)}_{a_1\cdots a_n}(m_1,\ldots,m_n),
\label{eq:vertex_symmetry}\\
\overline{T^{(n)}_{a_1\cdots a_n}(m_1,\ldots,m_n)}
&=T^{(n)}_{a_1\cdots a_n}(-m_1,\ldots,-m_n),
\label{eq:vertex_conjugation}\\
T^{(n)}_{a_1\cdots a_n}(-m_1,\ldots,-m_n)
&=\left(\prod_{\ell=1}^n\varepsilon_{a_\ell}\right)
T^{(n)}_{a_1\cdots a_n}(m_1,\ldots,m_n).
\label{eq:vertex_reflection}
\end{align}
\end{lemma}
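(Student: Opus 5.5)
The plan is to compute the vertex explicitly from the density $H_n$ and read the three identities off that formula. First write $H_n$ as a finite real linear combination of monomials $\prod_{\ell=1}^n\partial_x^{k_\ell}U_{a_\ell}$. Inserting the Fourier series~\eqref{eq:fourier_convention} into the integral~\eqref{eq:local_hamiltonian}, each such monomial becomes
\[
2\pi\sum_{m_1+\cdots+m_n=0}\prod_{\ell=1}^n(im_\ell)^{k_\ell}\,\widehat U_{a_\ell}(m_\ell),
\]
since $\int_{\mathbb T}e^{i(m_1+\cdots+m_n)x}\,dx=2\pi\delta_{m_1+\cdots+m_n,0}$; this is also where the momentum constraint comes from. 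Polarizing to obtain $D^n\mathsf H_n(0)$ then symmetrizes over the legs. It follows that $T^{(n)}_{a_1\cdots a_n}(m_1,\ldots,m_n)$ equals the full symmetrization over $\pi\in S_n$ of the summed monomial contributions, namely $2\pi$ times a real polynomial in $(im_1,\ldots,im_n)$ that depends on the indices. Uniqueness follows because the complex-multilinear extension is determined by its values on Fourier modes, once $T$ is required to be symmetric. The one technicality is whether this symmetry requirement is part of the definition. I will build the symmetrized version and note that the form $D^n\mathsf H_n(0)$ is symmetric, so its coefficient array on the constraint set can be taken symmetric.

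With this formula, \eqref{eq:vertex_symmetry} holds immediately by construction. For~\eqref{eq:vertex_conjugation}, note that $T$ is a polynomial with real coefficients in the variables $im_\ell$. Conjugation sends $im_\ell\mapsto -im_\ell=i(-m_\ell)$, so $\overline{T(m)}=T(-m)$.

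For~\eqref{eq:vertex_reflection}, use $R$-invariance. By~\eqref{eq:reflection_action}, $\widehat{(RU)}_a(m)=\varepsilon_a\widehat U_a(-m)$. This gives
\[
\mathsf H_n(RU)=\frac1{n!}\sum T_{a}(m)\prod_\ell\varepsilon_{a_\ell}\widehat U_{a_\ell}(-m_\ell)
=\frac1{n!}\sum \Bigl(\prod_\ell\varepsilon_{a_\ell}\Bigr)T_{a}(-m)\prod_\ell\widehat U_{a_\ell}(m_\ell).
\]
The second equality reindexes $m\mapsto -m$, which preserves the constraint set. Equating this with~\eqref{eq:ham_vertex} for all $U$, and polarizing via the multilinear form (invariance of $\mathsf H_n$ implies invariance of $D^n\mathsf H_n(0)$ under the linear map $R$), gives equality of the two symmetric coefficient arrays. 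Rewriting that equality with $\varepsilon^2=1$ yields~\eqref{eq:vertex_reflection}.

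The main obstacle is this last coefficient-identification step. It requires that symmetric coefficient arrays on the constraint set are uniquely determined by the multilinear form. I would justify this by testing the multilinear identity on the single-mode inputs $U^{(\ell)}=e^{im_\ell x}\mathbf e_{a_\ell}$. These inputs are complex, which is admissible because everything has been extended complex-multilinearly. Each such test isolates one coefficient up to the $n!$ permutations, and symmetry resolves those.
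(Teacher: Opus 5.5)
Your proposal is correct. For \eqref{eq:vertex_symmetry} and \eqref{eq:vertex_reflection} it follows the paper's proof. Symmetry is inherited from $D^n\mathsf H_n(0)$, and reflection comes from substituting $\widehat{(RU)}_a(k)=\varepsilon_a\widehat U_a(-k)$ into $\mathsf H_n(RU)=\mathsf H_n(U)$ and comparing coefficients. You justify that comparison more explicitly than the paper, by testing the complexified multilinear form on single modes.

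The genuine difference is in \eqref{eq:vertex_conjugation}. The paper never computes $T^{(n)}$. It conjugates \eqref{eq:ham_vertex}, uses $\overline{\widehat U_a(k)}=\widehat U_a(-k)$ and the reality of $\mathsf H_n$ on real $U$, and compares coefficients. You instead compute the vertex as $2\pi$ times a symmetrized real-coefficient polynomial in $(im_1,\ldots,im_n)$, and read the identity off $\overline{im_\ell}=i(-m_\ell)$.

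The paper's route is shorter and uses nothing about the density beyond reality, so it applies to any real translation-invariant form with a Fourier representation. Yours uses locality but yields more: $T^{(n)}$ is a polynomial symbol in the wavenumbers. It also sidesteps a point the paper leaves implicit. For real $U$ the coefficients $\widehat U_a(\pm k)$ are not independent, so comparing coefficients there really requires passing to the complex-multilinear extension, as you do for reflection.

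One small correction. In the multilinear form the slots are distinct, so the test inputs $U^{(\ell)}=e^{im_\ell x}\mathbf e_{a_\ell}$ return exactly $T^{(n)}_{a_1\cdots a_n}(m_1,\ldots,m_n)$, with no permutation ambiguity. Symmetry is needed only if you compare the diagonal forms $\mathsf H_n(U)$ directly.
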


\begin{proof}
Identity~\eqref{eq:vertex_symmetry} holds by construction because $D^n\mathsf H_n(0)$ is symmetric.
Since $\mathsf H_n(U)$ is real for real $U$, conjugating~\eqref{eq:ham_vertex}, using $\overline{\widehat U_a(k)}=\widehat U_a(-k)$, and comparing coefficients gives~\eqref{eq:vertex_conjugation}.
Finally, \eqref{eq:reflection_action} and~\eqref{eq:reflection_signs} give, for $a\in\{1,2\}$ and $k\in\mathbb Z$,
\[
\widehat{(RU)}_a(k)
=\widehat{\varepsilon_aU_a(-\,\cdot\,)}(k)
=\varepsilon_a\widehat U_a(-k);
\]
substituting into $\mathsf H_n(RU)=\mathsf H_n(U)$ and comparing coefficients gives~\eqref{eq:vertex_reflection}.
\end{proof}

\begin{example}\label{ex:vertex_cubic}
For $\mathsf H_3(U)=\int_{\mathbb T}u^2v_x\,dx$, which is $R$-invariant,
\[
\mathsf H_3(U)
=2\pi i\!\!\sum_{m_1+m_2+m_3=0}\!\!
m_3\,\widehat U_1(m_1)\widehat U_1(m_2)\widehat U_2(m_3),
\]
so~\eqref{eq:ham_vertex} gives $T^{(3)}_{112}(m_1,m_2,m_3)=4\pi im_3$, with $T^{(3)}_{121}$ and $T^{(3)}_{211}$ its permutations by~\eqref{eq:vertex_symmetry} and every other component zero.
The vertex is purely imaginary, and~\eqref{eq:vertex_conjugation} holds because $m_3$ is odd.
Its variational derivative $(2uv_x,-2uu_x)^T$ is $R$-equivariant, as Lemma~\ref{lem:variational_admissible} guarantees.
\end{example}

\begin{lemma}\label{lem:variational_admissible}
For $j=2,3$, let $\mathsf H_{j+1}$ be given by~\eqref{eq:local_hamiltonian} and be $R$-invariant.
Then $G_j$ given by~\eqref{eq:ham_variational_field} satisfies Assumption~\ref{ass:S}\ref{S1}--\ref{S3}.
\end{lemma}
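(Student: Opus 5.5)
We check \ref{S1}, \ref{S3} and \ref{S2} in that order, together with reality of coefficients. The plan is to write the variational derivative out explicitly, read off the first two properties, and reduce equivariance to one identity for the Euler--Lagrange operator.

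\emph{Differential polynomial and real coefficients.} Since $H_{j+1}$ is a real polynomial in $U,\partial_xU,\ldots,\partial_x^rU$, integration by parts in \eqref{eq:variational_derivative_definition} gives the Euler--Lagrange formula
\[
\frac{\delta\mathsf H_{j+1}}{\delta U_a}=\sum_{k=0}^r(-\partial_x)^k\frac{\partial H_{j+1}}{\partial(\partial_x^kU_a)}.
\]
This is a real homogeneous differential polynomial of degree $j$. Applying the real constant-coefficient operator $S\mathcal K=S\partial_xP(-\partial_x^2)$ preserves all of these properties. So \ref{S1} holds, the coefficients are real, and $G_j(0)=DG_j(0)=0$ because $j\ge2$.

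\emph{Conservation.} Since $\mathcal K=\partial_xP(-\partial_x^2)$ and $S$ is constant, $G_j=\partial_x\bigl(SP(-\partial_x^2)\,\delta\mathsf H_{j+1}/\delta U\bigr)$. The flux in parentheses is again a differential polynomial, so \ref{S3} holds.

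\emph{Equivariance.} Translation equivariance is immediate, because $x$ does not appear explicitly in the Euler--Lagrange expression or in $S\mathcal K$. For the reflection, first show that $R$-invariance of $\mathsf H_{j+1}$ gives
\[
\frac{\delta\mathsf H_{j+1}}{\delta U}(RU)=R\,\frac{\delta\mathsf H_{j+1}}{\delta U}(U).
\]
To see this, differentiate $\mathsf H_{j+1}(RU)=\mathsf H_{j+1}(U)$ in the direction $W$. This gives
\[
\int\frac{\delta\mathsf H}{\delta U}(RU)\cdot RW\,dx=\int\frac{\delta\mathsf H}{\delta U}(U)\cdot W\,dx.
\]
The operator $R$ is orthogonal for the real $L^2$ pairing, since $R_0$ is diagonal with entries $\pm1$ and $x\mapsto-x$ preserves the integral, and $R^2=I$. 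Hence $\int R\,\frac{\delta\mathsf H}{\delta U}(RU)\cdot W\,dx=\int\frac{\delta\mathsf H}{\delta U}(U)\cdot W\,dx$ for all $W$, which is the identity.

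Next check how $S\mathcal K$ interacts with $R$. Because $\mathcal K$ is odd in $\partial_x$, we have $\mathcal K(f(-\cdot))=-(\mathcal Kf)(-\cdot)$. Combining this with $SR_0=-R_0S$ gives $S\mathcal KR=(-1)(-1)RS\mathcal K=RS\mathcal K$. Therefore $G_j(RU)=S\mathcal K R\,\delta\mathsf H/\delta U(U)=RG_j(U)$, which is \ref{S2}.

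The main point needing care is this sign bookkeeping. The anticommutation of $S$ with $R_0$ has to cancel exactly against the oddness of $\mathcal K$. Everything else is routine.
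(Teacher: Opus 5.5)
Your proposal is correct and follows essentially the same route as the paper. \ref{S1} and \ref{S3} come from writing $G_j=\partial_x\bigl(SP(-\partial_x^2)\,\delta\mathsf H_{j+1}/\delta U\bigr)$. Reflection equivariance of $\delta\mathsf H_{j+1}/\delta U$ comes from differentiating $\mathsf H_{j+1}(RU)=\mathsf H_{j+1}(U)$ and using that $R$ preserves the $L^2$ pairing. Finally, the sign changes from $SR_0=-R_0S$ and from the odd multiplier $\mathcal K$ cancel. Your explicit Euler--Lagrange formula and your check of real coefficients and $G_j(0)=DG_j(0)=0$ only spell out what the paper leaves implicit.
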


\begin{proof}
Since $\delta\mathsf H_{j+1}/\delta U$ is a differential polynomial and $G_j=\partial_x\bigl(SP(-\partial_x^2)\,\delta\mathsf H_{j+1}/\delta U\bigr)$, \ref{S1} and~\ref{S3} hold.
For~\ref{S2}, $G_j$ commutes with translations because the density does not depend on $x$ explicitly, and differentiating $\mathsf H_{j+1}(RU)=\mathsf H_{j+1}(U)$ with $\langle RU,RV\rangle_{L^2}=\langle U,V\rangle_{L^2}$ gives
\[
\frac{\delta\mathsf H_{j+1}}{\delta U}(RU)
=R\frac{\delta\mathsf H_{j+1}}{\delta U}(U).
\]
Both $\mathcal K$ and $S$ anticommute with $R$, the former because it contains only odd derivatives and the latter because $SR_0=-R_0S$, so $S\mathcal KR=R\,S\mathcal K$ and $G_j(RU)=RG_j(U)$.
\end{proof}

One further consequence of the vertex representation is the Fourier form of the variational derivative.

\begin{lemma}\label{lem:monomial_fourier_coefficient}
Let $\mathsf H_n$ be a homogeneous local Hamiltonian of degree $n$ with vertex $T^{(n)}$, as represented in~\eqref{eq:ham_vertex}.
Then, for $a\in\{1,2\}$ and $k\in\mathbb Z$,
\begin{equation}\label{eq:vertex_variational_derivative}
\begin{aligned}
\mathcal F_x\!\left[\frac{\delta\mathsf H_n}{\delta U_a}(U)\right](k)
={}&\frac{1}{2\pi(n-1)!}
\sum_{a_1,\ldots,a_{n-1}=1}^2
\sum_{m_1+\cdots+m_{n-1}=k}\\
&\quad{}\times
T^{(n)}_{a a_1\cdots a_{n-1}}(-k,m_1,\ldots,m_{n-1})
\prod_{\ell=1}^{n-1}\widehat U_{a_\ell}(m_\ell),
\end{aligned}
\end{equation}
with $\mathcal F_x$ the Fourier transform in~\eqref{eq:fourier_convention}.
\end{lemma}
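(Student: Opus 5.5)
The plan is to compute the first variation $D\mathsf H_n(U)[W]$ in two ways, once from the defining identity~\eqref{eq:variational_derivative_definition} and once from the vertex expansion~\eqref{eq:ham_vertex_multilinear}, and then to test against a single complex Fourier mode so that one coefficient is isolated.

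\emph{Step 1 (vertex side).} Since $\mathsf H_n$ is a homogeneous polynomial functional of degree $n$, we have $\mathsf H_n(U)=\frac1{n!}D^n\mathsf H_n(0)[U,\ldots,U]$, with $D^n\mathsf H_n(0)$ symmetric and multilinear. Differentiating in the direction $W$ therefore gives
\[
D\mathsf H_n(U)[W]=\frac{1}{(n-1)!}\,D^n\mathsf H_n(0)[W,U,\ldots,U].
\]
Inserting~\eqref{eq:ham_vertex_multilinear} with $W$ in the first leg expresses this as
\[
\frac1{(n-1)!}\sum_{a,a_1,\ldots,a_{n-1}}\ \sum_{m_0+m_1+\cdots+m_{n-1}=0}T^{(n)}_{aa_1\cdots a_{n-1}}(m_0,m_1,\ldots,m_{n-1})\,\widehat W_a(m_0)\prod_{\ell}\widehat U_{a_\ell}(m_\ell).
\]
Placing $W$ in the first slot is legitimate by the symmetry~\eqref{eq:vertex_symmetry}.

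\emph{Step 2 (definition side).} Both sides of~\eqref{eq:variational_derivative_definition} are real-linear in real $W$. We extend them complex-linearly, using the bilinear dot product without conjugation and the complex-multilinear vertex. This extension is consistent, because the left side is the restriction of a complex-linear map and the complexifications coincide on a real spanning set. By Parseval with the convention~\eqref{eq:fourier_convention}, the left side equals
\[
\int_{\mathbb T}\frac{\delta\mathsf H_n}{\delta U}\cdot W\,dx=2\pi\sum_{b}\sum_{m}\mathcal F_x\!\left[\frac{\delta\mathsf H_n}{\delta U_b}\right](m)\,\widehat W_b(-m).
\]

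\emph{Step 3 (testing).} Take $W=e^{-ikx}\mathbf e_a$, so that $\widehat W_b(m)=\delta_{ab}\delta_{m,-k}$. The definition side then reduces to $2\pi\,\mathcal F_x[\delta\mathsf H_n/\delta U_a](k)$. On the vertex side the leg $m_0$ is forced to equal $-k$, the constraint becomes $m_1+\cdots+m_{n-1}=k$, and only components $T^{(n)}_{aa_1\cdots a_{n-1}}(-k,m_1,\ldots)$ survive. Dividing by $2\pi$ yields~\eqref{eq:vertex_variational_derivative}.

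The only delicate point is Step 2. One must ensure that the complex-linear extension is the same on both sides, and that the momentum constraint is handled consistently. For smooth $U$ all sums converge absolutely and termwise, and the sums are finite for trigonometric polynomials $U$, so no analytic obstacle arises.
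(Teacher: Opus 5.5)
Your proposal is correct and follows essentially the paper's own route: you differentiate the vertex representation, use symmetry to put $W$ in the first leg (which produces the factor $n/n!=1/(n-1)!$), and match the result against the Parseval form $2\pi\sum_{a,k}\mathcal F_x[\delta\mathsf H_n/\delta U_a](k)\,\widehat W_a(-k)$ of the defining identity. Your single-mode test $W=e^{-ikx}\mathbf e_a$ is an explicit version of the paper's ``comparing coefficients of $\widehat W_a(-k)$'', and your remark on the complex-linear extension in $W$ spells out a point the paper leaves implicit.
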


\begin{proof}
Varying $U\mapsto U+\varepsilon W$ in~\eqref{eq:ham_vertex}, using total symmetry~\eqref{eq:vertex_symmetry} to move the leg carrying $W$ to the first slot, so that the $n$ terms of the product rule coincide and $n/n!=1/(n-1)!$, and comparing coefficients of $\widehat W_a(-k)$ with $D\mathsf H_n(U)[W]=2\pi\sum_{a,k}\mathcal F_x[\delta\mathsf H_n/\delta U_a](k)\,\widehat W_a(-k)$, which follows from~\eqref{eq:variational_derivative_definition} and Fourier orthogonality, gives~\eqref{eq:vertex_variational_derivative}.
\end{proof}

\subsection{Proof of Theorem~\ref{thm:hamiltonian_flux}}\label{sec:vertex}

Throughout this subsection, except in the proof of Proposition~\ref{prop:degeneracies}, $(L_\lambda,G)$ is admissible and $G_j=S\mathcal K\,\delta\mathsf H_{j+1}/\delta U$ for $j=2,3$, as in~\eqref{eq:ham_variational_field}.

\emph{Projection identity.}
For any smooth periodic $W:\mathbb T\to\mathbb C^2$,
\begin{equation}\label{eq:swap_cancellation}
\mathcal P_1(\mathcal K W)
=\left\langle \mathcal K W,e_{m_c,1}^*\right\rangle_{L^2}
=\frac{i\ksym(m_c)}{2r_c}\,\widehat W(m_c)\mathbin{\cdot}S\overline{\mathbf q},
\end{equation}
where the pairing selects the mode $m_c$ and we use~\eqref{eq:ham_adjoint}.

Defining
\[
Z_{\mathrm s,11}=[A^2\overline A]\frac{\delta\mathsf H_4}{\delta U}(\psi_c),
\qquad
Z_{\mathrm s,12}=[AB\overline B]\frac{\delta\mathsf H_4}{\delta U}(\psi_c),
\qquad
Z_{\mathrm c,12}=[AB\overline B]\,
D\!\left(\frac{\delta\mathsf H_3}{\delta U}\right)(\psi_c)h_2,
\]
and applying~\eqref{eq:swap_cancellation} to the equations~\eqref{eq:direct_coefficient_extraction}--\eqref{eq:cascade_coefficient_extraction} gives
\begin{equation}\label{eq:vertex_coefficient_form}
\hat s_{1,1}=\frac{i\ksym(m_c)}{2r_c}\,\widehat Z_{\mathrm s,11}(m_c)\mathbin{\cdot}\overline{\mathbf q},
\qquad
\hat s_{1,2}=\frac{i\ksym(m_c)}{2r_c}\,\widehat Z_{\mathrm s,12}(m_c)\mathbin{\cdot}\overline{\mathbf q},
\qquad
\hat c_{1,2}=\frac{i\ksym(m_c)}{2r_c}\,\widehat Z_{\mathrm c,12}(m_c)\mathbin{\cdot}\overline{\mathbf q},
\end{equation}
where we use~\eqref{eq:ham_variational_field}, \eqref{eq:extraction_commutation} and $S^TS=I$.

By $m_{AB}=2m_c$ in~\eqref{eq:quadratic_mode_frequency_table} and by~\eqref{eq:zero_mode_vanishing},
\[
[AB]h_2=\Phi_{AB}e^{2im_cx},
\qquad
[A\overline B]h_2=[B\overline B]h_2=0 .
\]
Since $\delta\mathsf H_3/\delta U$ is quadratic,
\[
Z_{\mathrm c,12}
=D\!\left(\frac{\delta\mathsf H_3}{\delta U}\right)\!
\bigl([\overline B]\psi_c\bigr)\,\Phi_{AB}e^{2im_cx},
\qquad
[\overline B]\psi_c=-R_0\mathbf q\,e^{-im_cx},
\]
the input legs $-m_c$ and $2m_c$ summing to the output mode $m_c$.
Substituting~\eqref{eq:center_linear_parametrization} into~\eqref{eq:vertex_variational_derivative} for the two direct fields, and differentiating~\eqref{eq:vertex_variational_derivative} for the cascade field, then contracting with $\overline{\mathbf q}$, gives
\begin{equation}\label{eq:vertex_cascade_contraction}
\begin{gathered}
\widehat Z_{\mathrm s,11}(m_c)\mathbin{\cdot}\overline{\mathbf q}
=\kappa_{\mathrm s,11}X_{\mathrm s,11},
\qquad
\widehat Z_{\mathrm s,12}(m_c)\mathbin{\cdot}\overline{\mathbf q}
=\kappa_{\mathrm s,12}X_{\mathrm s,12},
\qquad
\widehat Z_{\mathrm c,12}(m_c)\mathbin{\cdot}\overline{\mathbf q}
=\kappa_{\mathrm c,12}X_{\mathrm c,12},\\
\begin{aligned}
X_{\mathrm s,11}
&=\sum_{a,b,c,d}T^{(4)}_{abcd}(-m_c,m_c,m_c,-m_c)
\overline q_aq_bq_c\overline q_d,\\
X_{\mathrm s,12}
&=\sum_{a,b,c,d}T^{(4)}_{abcd}(-m_c,m_c,m_c,-m_c)
\overline q_aq_b\,\varepsilon_c\overline q_c\,\varepsilon_dq_d,\\
X_{\mathrm c,12}
&=-\sum_{a,b,c}T^{(3)}_{abc}(-m_c,2m_c,-m_c)
\overline q_a(\Phi_{AB})_b\varepsilon_cq_c,
\end{aligned}
\end{gathered}
\end{equation}
where each $\kappa$ is a positive constant, the number of slot assignments of the monomial in~\eqref{eq:vertex_variational_derivative}, all equal by total symmetry~\eqref{eq:vertex_symmetry}, divided by $2\pi(n-1)!$.
Since $\ksym(m_c)$, $r_c$ and the $\kappa$ are real, \eqref{eq:vertex_coefficient_form} reduces the theorem to the reality of the three contractions.

\emph{Direct coefficients.}
For $X_{\mathrm s,11}$, conjugation and the vertex identities of Lemma~\ref{lem:hamiltonian_vertex_identities} give
\[
\begin{aligned}
\overline{X_{\mathrm s,11}}
&=\sum_{a,b,c,d}T^{(4)}_{abcd}(m_c,-m_c,-m_c,m_c)\,q_a\overline q_b\overline q_cq_d\\
&=\sum_{a,b,c,d}T^{(4)}_{badc}(-m_c,m_c,m_c,-m_c)\,q_a\overline q_b\overline q_cq_d\\
&=\sum_{a,b,c,d}T^{(4)}_{abcd}(-m_c,m_c,m_c,-m_c)\,\overline q_aq_bq_c\overline q_d
=X_{\mathrm s,11}.
\end{aligned}
\]
Here the three equalities are vertex conjugation symmetry~\eqref{eq:vertex_conjugation}, vertex symmetry~\eqref{eq:vertex_symmetry} for the interchange of slots $1\leftrightarrow2$ and $3\leftrightarrow4$, and the relabeling $a\leftrightarrow b$, $c\leftrightarrow d$.
Thus $X_{\mathrm s,11}\in\mathbb R$, and~\eqref{eq:vertex_coefficient_form} gives $\Re\hat s_{1,1}=0$.

The same three steps give $X_{\mathrm s,12}\in\mathbb R$, so~\eqref{eq:vertex_coefficient_form} gives $\Re\hat s_{1,2}=0$.
Reflection parity~\eqref{eq:vertex_reflection} is not needed for either direct coefficient.

\emph{Mixed cascade coefficient.}
Let $V:=\mathcal F_x\bigl[[AB]\,\delta\mathsf H_3/\delta U(\psi_c)\bigr](2m_c)\in\mathbb C^2$.
By~\eqref{eq:quadratic_forcing_definition}, \eqref{eq:ham_variational_field} with~\eqref{eq:extraction_commutation}, and~\eqref{eq:K_symbol},
\[
\hat g_{AB}
=\mathcal F_x\bigl[[AB]\,G_2(\psi_c)\bigr](2m_c)
=\mathcal F_x\Bigl[S\mathcal K\,[AB]\,\frac{\delta\mathsf H_3}{\delta U}(\psi_c)\Bigr](2m_c)
=i\ksym(2m_c)\,SV.
\]
Substituting~\eqref{eq:center_linear_parametrization} into~\eqref{eq:vertex_variational_derivative} with $n=3$ and $\mathbf q_B=-R_0\overline{\mathbf q}$ gives
\[
V_a=-\frac{1}{2\pi}\sum_{b,c}
T^{(3)}_{abc}(-2m_c,m_c,m_c)q_b\varepsilon_c\overline q_c,
\qquad a=1,2.
\]
Componentwise conjugation and reflection parity~\eqref{eq:vertex_reflection} give
\[
\overline V_a
=-\frac{1}{2\pi}\sum_{b,c}T^{(3)}_{abc}(2m_c,-m_c,-m_c)
\overline q_b\varepsilon_cq_c
=-\frac{\varepsilon_a}{2\pi}\sum_{b,c}T^{(3)}_{abc}(-2m_c,m_c,m_c)
\varepsilon_b\overline q_bq_c = \varepsilon_aV_a
\]
by interchanging $2\leftrightarrow3$ and relabeling $b,c$.
Thus $\overline V=R_0V$.
Since $\beta_{AB}=0$, the second-harmonic coefficient is $\Phi_{AB}=(-M_{2m_c})^{-1}\hat g_{AB}$ by~\eqref{eq:mixed_second_harmonic}.
Inverting~\eqref{eq:symbol_parity}, legitimate because $\det M_{2m_c}>0$ by Assumption~\ref{ass:H}\ref{H1}, gives $\overline{M_{2m_c}^{-1}}=R_0M_{2m_c}^{-1}R_0$, and the conjugation law is preserved along
\begin{equation}\label{eq:vertex_Psi_conjugation}
\overline V=R_0V
\quad\Longrightarrow\quad
\overline{\hat g_{AB}}=R_0\hat g_{AB}
\quad\Longrightarrow\quad
\overline{\Phi_{AB}}=R_0\Phi_{AB}.
\end{equation}

For the outer contraction $X_{\mathrm c,12}$ in~\eqref{eq:vertex_cascade_contraction} the same three steps apply, with two changes: the conjugate of the middle leg is $\overline{(\Phi_{AB})_b}=\varepsilon_b(\Phi_{AB})_b$ by~\eqref{eq:vertex_Psi_conjugation}, and the parity factors $\varepsilon_b\varepsilon_c$ are absorbed by reflection parity~\eqref{eq:vertex_reflection}.
Hence $X_{\mathrm c,12}\in\mathbb R$, and~\eqref{eq:vertex_coefficient_form} gives $\Re\hat c_{1,2}=0$.

This proves~\ref{T1} and~\ref{T2}.
By the splitting~\eqref{eq:coefficient_splitting},
$\xiR=\Re(\hat s_{1,2}+\hat c_{1,2})=0$ and
$\zetaR=\Re(\hat s_{1,1}+\hat c_{1,1})=\Re\hat c_{1,1}$,
which proves the two identities of~\ref{T3}.
The last assertion follows from Proposition~\ref{prop:degeneracies}.

\emph{Wave selection.}
The cubic truncation of~\eqref{reduced_system} is the standard $O(2)$-Hopf normal form, whose equilibria and their stability are classified by the signs of $\mu=\Re\beta_{m_c,1}(\lambda)$, $\zetaR$ and $\zetaR\pm\xiR$~\cite{vangils1986hopf,crawford1991symmetry}.
See~\cite[Prop.~3.5]{ozersengultiryakioglu2026} in the present notation.
Since $\xiR=0$ and $\zetaR\ne0$ give $\zetaR\pm\xiR\ne0$, item~\ref{T4} is the case $\zetaR\gtrless0$ of that classification.

\begin{proof}[Proof of Proposition~\ref{prop:degeneracies}]
Variationality is not used here.
For~\ref{Dg1}, \eqref{eq:swap_cancellation} applied to~\eqref{eq:direct_coefficient_extraction}--\eqref{eq:cascade_coefficient_extraction} with $G_j=\mathcal KF_j$ shows that all four coefficients carry the factor $\ksym(m_c)$.
For~\ref{Dg2}, the forcings $\hat g_{AA}$ and $\hat g_{AB}$ of~\eqref{eq:quadratic_forcing_definition} are the mode-$2m_c$ coefficients of $\mathcal KF_2(\psi_c)$ and so carry the factor $i\ksym(2m_c)=2im_cP(4m_c^2)$, which vanishes when $P(4m_c^2)=0$.
The homological equations~\eqref{eq:self_second_harmonic}--\eqref{eq:mixed_second_harmonic} then give $\Phi_{AA}=\Phi_{AB}=0$, hence $\hat c_{1,1}=\hat c_{1,2}=0$.
\end{proof}

\begin{remark}\label{rem:surviving_coefficient}
The only potentially nonzero real contribution is $\Re\hat c_{1,1}$, the $AA$ cascade produced by $G_2$ acting twice.
Its center-manifold coefficient contains $(2\beta_1I-M_{2m_c})^{-1}$ rather than $(-M_{2m_c})^{-1}$.
Conjugation reverses the shift $2i\omega_c$, so the parity law~\eqref{eq:vertex_Psi_conjugation} for $\Phi_{AB}$ need not hold for $\Phi_{AA}$.
\end{remark}

\section{Derivative-free defect identities and proof of the converse}\label{sec:defects}

This section proves Theorem~\ref{thm:converse}.
Throughout this section $G_2$ and $G_3$ are derivative-free for a common multiplier $\mathcal K=\partial_xP(-\partial_x^2)$.

\begin{lemma}\label{lem:flux_uniqueness_parity}
Let $G$ satisfy Assumption~\ref{ass:S}\ref{S1}--\ref{S3} and let $G_j=\mathcal KF_j$ for a multiplier $\mathcal K$ with $F_j$ derivative-free.
Then $F_j=(F_{j,1},F_{j,2})^T$ is the unique derivative-free flux of $G_j$ for $\mathcal K$, satisfies
\begin{equation}\label{eq:flux_parity}
F_j(R_0U)=-R_0F_j(U),
\end{equation}
and has the form
\begin{equation}\label{eq:general_equivariant_flux}
\begin{aligned}
F_{2,1}&=g_{11}uv,
&F_{2,2}&=\tfrac12(g_{21}u^2+g_{22}v^2),\\
F_{3,1}&=h_{11}u^2v+h_{12}v^3,
&F_{3,2}&=h_{21}u^3+h_{22}uv^2,
\end{aligned}
\end{equation}
with real coefficients.
\end{lemma}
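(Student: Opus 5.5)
The proof has three parts: uniqueness, parity, and the explicit form, in that order.

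\emph{Uniqueness.} Suppose $F_j$ and $\tilde F_j$ are both derivative-free fluxes of $G_j$. Then $\mathcal K(F_j-\tilde F_j)=0$. On the Fourier side, $\mathcal K$ acts by $i\ksym(m)=imP(m^2)$. Since $P$ is a nonzero polynomial, this symbol vanishes only at $m=0$ and at the finitely many integer roots of $mP(m^2)$. Hence the difference $D=F_j-\tilde F_j$, a polynomial map in $U$ alone, maps every smooth periodic $U$ to a trigonometric polynomial supported on finitely many modes. I would test this on $U=U_0+\varepsilon W$ with $U_0\in\mathbb R^2$ constant and $W$ a single real Fourier mode $e_a\cos(Nx)$, with $N$ large. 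Expanding in $\varepsilon$, the coefficient of $\varepsilon^k$ is $\partial^k_{W}D(U_0)$ times $\cos^k(Nx)$, which contains the mode $kN$. For $N$ beyond the finite exceptional set, that mode must vanish, so all directional derivatives vanish at every $U_0$. The $k=0$ term is constant, and a constant function lies in the zero mode. Mean-zero conventions are not in force for $F_j$, but $\mathcal K$ kills constants, so a homogeneous polynomial of degree $j\ge2$ that is constant for all $U_0$ must be zero by homogeneity. This gives $D\equiv0$.

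\emph{Parity.} By (S2), $G_j(RU)=RG_j(U)$, and $\mathcal K$ anticommutes with $R$ since it has only odd derivatives. Applying this to $G_j=\mathcal KF_j$ gives
\[
\mathcal K\bigl[F_j(RU)\bigr]=G_j(RU)=RG_j(U)=R\mathcal KF_j(U)=-\mathcal K\bigl[RF_j(U)\bigr].
\]
Because $F_j$ is derivative-free, $F_j(RU)(x)=F_j(R_0U(-x))$ and $RF_j(U)(x)=R_0F_j(U(-x))$. So both $F_j(RU)$ and $-RF_j(U)$ are derivative-free fluxes of the same field, namely the fluxes $U\mapsto F_j(R_0U(-\cdot))$ and $U\mapsto -R_0F_j(U(-\cdot))$ composed with reflection. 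Applying uniqueness to the maps $V\mapsto F_j(R_0V)$ and $V\mapsto -R_0F_j(V)$ yields \eqref{eq:flux_parity}. The one point needing care is that uniqueness is applied to the reflected argument; I would handle this by substituting $V=U(-\cdot)$, which ranges over all smooth periodic functions. Translation equivariance adds nothing, since a derivative-free flux is automatically translation equivariant. Reality of the coefficients follows from the real coefficients of $G$ and uniqueness: $\overline{F_j}$ is also a flux for real $U$.

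\emph{Form.} Identity \eqref{eq:flux_parity} reads $F_{j,1}(u,-v)=-F_{j,1}(u,v)$ and $F_{j,2}(u,-v)=F_{j,2}(u,v)$. So $F_{j,1}$ is odd in $v$ and $F_{j,2}$ is even in $v$. Listing the homogeneous monomials of degree $2$ and $3$ with the right parity in $v$ gives exactly \eqref{eq:general_equivariant_flux}, where the factor $\tfrac12$ is a normalization. The main obstacle is making the uniqueness step rigorous, in particular the finite-kernel argument and the treatment of constants. I would handle constants first via homogeneity, exactly as above, and then run the single-mode testing.
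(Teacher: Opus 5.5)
Your proposal is correct and takes essentially the same route as the paper. Uniqueness comes from testing against a single high-wavenumber cosine mode where $\ksym$ does not vanish; parity comes from applying uniqueness to $F_j$ and $-RF_jR$, which is what your substitution $V=U(-\cdot)$ does; and the form comes from listing the monomials with the right parity in $v$. The differences are minor: the paper tests directly on $\mathbf w\cos kx$ and uses homogeneity, $F(\mathbf w\cos kx)=F(\mathbf w)\cos^jkx$, which avoids your $\varepsilon$-expansion about a non-mean-zero constant $U_0$, and your remark that uniqueness also forces real coefficients makes explicit a point the paper leaves implicit.
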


\begin{proof}
Suppose $F$ is a homogeneous polynomial map of degree $j$ with $\mathcal KF(U)=0$ for all $U$.
Take $U=\mathbf w\cos kx$ with constant $\mathbf w\in\mathbb R^2$.
Then, by~\eqref{eq:K_symbol},
\[
F(U)=F(\mathbf w)\cos^jkx,
\qquad
\widehat{\mathcal KF(U)}(jk)=i\,jk\,P(j^2k^2)\,2^{-j}F(\mathbf w),
\]
and $jkP(j^2k^2)\ne0$ for all large $k$, since $P$ has finitely many roots, forcing $F(\mathbf w)=0$ for every $\mathbf w$.
Applied to the difference of two derivative-free fluxes, this proves uniqueness of $F_j$.
For~\eqref{eq:flux_parity}, $\mathcal KR=-R\mathcal K$, since $\mathcal K$ has only odd derivatives, and $RG_jR=G_j$ by~\ref{S2} give
\[
\mathcal K\bigl(F_j+RF_jR\bigr)
=G_j-RG_jR
=0,
\]
hence $F_j+RF_jR=0$ by uniqueness, which is~\eqref{eq:flux_parity} pointwise.
Since $R_0=\operatorname{diag}(1,-1)$, \eqref{eq:flux_parity} makes $F_{j,1}$ odd and $F_{j,2}$ even in $v$, and homogeneity of degree $j$ leaves the monomials in~\eqref{eq:general_equivariant_flux}.
\end{proof}

The \emph{quadratic} and \emph{cubic variational defects} of the flux~\eqref{eq:general_equivariant_flux} are
\[
\mathcal D_2=g_{22}-g_{11},
\qquad
\mathcal D_3=h_{11}-h_{22}.
\]

\begin{lemma}\label{lem:defects_variational}
Under the hypotheses of Lemma~\ref{lem:flux_uniqueness_parity}, $\mathcal D_j=0$ if and only if $F_j$ is variational, $j=2,3$.
\end{lemma}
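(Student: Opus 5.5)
Both directions can be checked by hand on the explicit form~\eqref{eq:general_equivariant_flux} of Lemma~\ref{lem:flux_uniqueness_parity}. The key point is the uniqueness of the derivative-free flux: $F_j$ is variational exactly when the polynomial map $SF_j$ is the gradient of some homogeneous polynomial. Here it helps that, for a derivative-free density $H_{j+1}(U)$, the variational derivative $\delta\mathsf H_{j+1}/\delta U$ is just the pointwise gradient $\nabla_UH_{j+1}$.

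\emph{Sufficiency.} Assume $\mathcal D_j=0$. For $j=2$, set $H_3=\tfrac12 g_{11}u^2v+\tfrac16 g_{21}v^3$. Then
\[
\nabla H_3=\bigl(g_{11}uv,\ \tfrac12(g_{11}u^2+g_{21}v^2)\bigr)^T ,
\]
so $S\nabla H_3=\bigl(\tfrac12(g_{11}u^2+g_{21}v^2),\,g_{11}uv\bigr)^T$. I want this to equal $F_2$, but it does not match componentwise as written. So the labels need care: $S\nabla H$ swaps the components, which means the first component of $F_2$ must equal $\partial_vH$ and the second must equal $\partial_uH$. With $F_{2,1}=g_{11}uv=\partial_vH$ and $F_{2,2}=\tfrac12(g_{21}u^2+g_{22}v^2)=\partial_uH$, the cross-derivative compatibility $\partial_u F_{2,1}=\partial_vF_{2,2}$ reads $g_{11}v=g_{22}v$, which is exactly $\mathcal D_2=0$. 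In that case $H_3=\tfrac16 g_{21}u^3+\tfrac12 g_{11}uv^2$ works.

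The cubic case is the same. The compatibility condition $\partial_uF_{3,1}=\partial_vF_{3,2}$ gives $2h_{11}uv=2h_{22}uv$, that is, $\mathcal D_3=0$. When it holds, $H_4=\tfrac14h_{21}u^4+\tfrac12h_{11}u^2v^2+\tfrac14h_{12}v^4$ satisfies $S\nabla H_4=F_3$. Both $H_3$ and $H_4$ are even in $v$, so $H_{j+1}(R_0U)=H_{j+1}(U)$, and with no $x$-dependence this gives $R$-invariance of $\mathsf H_{j+1}$. Thus $F_j$ is variational.

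\emph{Necessity.} This is the main step. Suppose $F_j=S\,\delta\mathsf H_{j+1}/\delta U$ for some $R$-invariant local Hamiltonian, which may involve derivatives. Then $S\,\delta\mathsf H_{j+1}/\delta U$ is a derivative-free polynomial map. I cannot directly conclude that the density is derivative-free. Instead I would use the standard fact that the Euler operator annihilates exact divergences and that variational derivatives satisfy the Helmholtz condition: the Fréchet derivative of $E=\delta\mathsf H/\delta U$ is formally self-adjoint. Here $E=SF_j(U)$ is derivative-free, so its Fréchet derivative is multiplication by the Jacobian matrix $D(SF_j)(U)$. Self-adjointness of a multiplication operator means that this matrix is symmetric. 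Its symmetry is the condition $\partial_vF_{j,1}\cdot$(swapped) $=$ the cross-derivative equality above, which gives $\mathcal D_j=0$.

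To avoid citing the Helmholtz theory, I could instead test directly with vertices: the second variation $D^2\mathsf H_{j+1}$ is symmetric. Pairing $D(\delta\mathsf H/\delta U)(U)[W]$ with $W'$ must be symmetric in $(W,W')$, and evaluating this at constant-coefficient test data (for instance $U=\mathbf w\cos kx$, with $W,W'$ suitable Fourier modes) extracts the same cross-derivative identity. I expect this necessity step to be the only delicate point. The sign bookkeeping for $S$ (which component pairs with $\partial_u$ and which with $\partial_v$) is routine but must be done consistently with~\eqref{eq:ham_variational_field}.
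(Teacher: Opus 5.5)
Your proposal is correct. The sufficiency half is the paper's argument: once $\mathcal D_j=0$, your explicit potentials are exactly $H_{j+1}=\frac1{j+1}U\cdot SF_j(U)$, and your evenness check in $v$ is the paper's parity argument written out. You should drop your first, $v$-odd candidate for $H_3$.

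Necessity is where your route differs from the paper's.

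\emph{The paper's route.} It avoids your worry that the density need not be derivative-free by using homogeneity:
$(j+1)\mathsf H_{j+1}(U)=D\mathsf H_{j+1}(U)[U]=\int_{\mathbb T}U\cdot SF_j(U)\,dx$.
So the \emph{functional} equals $\int_{\mathbb T}H_{j+1}(U)\,dx$, with the same derivative-free density as in the sufficiency step. Uniqueness of the variational derivative then gives $SF_j=\nabla_UH_{j+1}$. One formula serves both directions, and no test-function argument is needed.

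\emph{Your route.} You use symmetry of the second variation, $\int_{\mathbb T}W\cdot J(U)W'\,dx=\int_{\mathbb T}W'\cdot J(U)W\,dx$ with $J=D(SF_j)$. Then you localize in $W,W'$ and evaluate at constant $U=\mathbf w$. This is equally rigorous and does not use homogeneity, so it covers any local functional whose variational derivative is derivative-free. The cost is the localization step.

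In the write-up, state the outcome plainly. The off-diagonal entries of $D(SF_j)$ are $\partial_vF_{j,2}$ and $\partial_uF_{j,1}$. So symmetry is $\partial_uF_{j,1}=\partial_vF_{j,2}$, which is $g_{11}=g_{22}$ for $j=2$ and $h_{11}=h_{22}$ for $j=3$. This replaces your garbled ``$\partial_vF_{j,1}\cdot$(swapped)''.

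Finally, uniqueness of the derivative-free flux does not drive either direction. It only stands behind the form \eqref{eq:general_equivariant_flux} you start from.
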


\begin{proof}
For the form~\eqref{eq:general_equivariant_flux}, the Helmholtz condition
\begin{equation}\label{eq:hamiltonian_flux}
\partial_uF_{j,1}=\partial_vF_{j,2}
\end{equation}
is equivalent to $\mathcal D_j=0$.
Since $S$ is the swap matrix, \eqref{eq:hamiltonian_flux} says that $SF_j$ is a gradient, and Euler's identity $U\cdot\nabla_UH=(j+1)H$ for a homogeneous polynomial $H$ of degree $j+1$ gives its potential as
\begin{equation}\label{eq:derivative_free_potential}
H_{j+1}(U)=\frac{1}{j+1}\,U\mathbin{\cdot}SF_j(U),
\qquad
F_j=S\nabla_UH_{j+1}.
\end{equation}
\emph{$\mathcal D_j=0$ implies variationality.}
If $\mathcal D_j=0$, the derivative-free density $H_{j+1}$ defines a local Hamiltonian $\mathsf H_{j+1}$ with $\delta\mathsf H_{j+1}/\delta U=\nabla_UH_{j+1}$, which is $R$-invariant because $H_{j+1}(R_0U)=H_{j+1}(U)$ by~\eqref{eq:flux_parity} and $SR_0=-R_0S$.
Hence $F_j$ is variational.
\emph{Variationality implies $\mathcal D_j=0$.}
If $F_j=S\,\delta\mathsf H_{j+1}/\delta U$ for an $R$-invariant local Hamiltonian $\mathsf H_{j+1}$, then homogeneity and~\eqref{eq:variational_derivative_definition} give
\[
\mathsf H_{j+1}(U)=\frac{1}{j+1}\int_{\mathbb T}U\mathbin{\cdot}\frac{\delta\mathsf H_{j+1}}{\delta U}\,dx
=\int_{\mathbb T}H_{j+1}(U)\,dx
\]
with $H_{j+1}$ as in~\eqref{eq:derivative_free_potential}, so $SF_j=\nabla_UH_{j+1}$ is a gradient, \eqref{eq:hamiltonian_flux} holds, and $\mathcal D_j=0$.
\end{proof}

Since $\det M_{2m_c}(\lambda_c)>0$ the symbol is invertible at mode $2m_c$, and parity gives its inverse the form
\begin{equation}\label{eq:inverse_symbol_parity}
M_{2m_c}^{-1}
=\begin{pmatrix}\varrho_{11}&i\varrho_{12}\\
i\varrho_{21}&\varrho_{22}\end{pmatrix},
\qquad \varrho_{ij}\in\mathbb R.
\end{equation}
Abbreviate
\begin{equation}\label{eq:E_of_L}
E=\varrho_{22}\bigl(g_{22}\abs{v_c}^2-g_{21}\abs{u_c}^2\bigr)-2\varrho_{21}g_{11}\chi,
\end{equation}
with $\chi=\Im(u_c\overline{v_c})$ as in~\eqref{eq:qc}.

\begin{proposition}[Helmholtz-defect factorization]
\label{prop:ham_explicit_defects}
Suppose $(L_\lambda,G)$ is admissible and $G_2$, $G_3$ are derivative-free for a common multiplier $\mathcal K$.
Then
\begin{align}
\Re\hat c_{1,2}
={}&\frac{\ksym(m_c)\ksym(2m_c)}2\,\mathcal D_2\,E,
\label{eq:ham_quadratic_identity}\\
\Re\hat s_{1,1}
={}&-\ksym(m_c)\chi\,\mathcal D_3,
\label{eq:ham_cubic_self_identity}\\
\Re\hat s_{1,2}
={}&2\ksym(m_c)\chi\,\mathcal D_3.
\label{eq:ham_cubic_identity}
\end{align}
\end{proposition}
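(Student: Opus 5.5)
Throughout, we compute directly with the explicit fluxes~\eqref{eq:general_equivariant_flux} instead of working with vertices. The starting point is the projection identity~\eqref{eq:swap_cancellation}, valid for any $W$ with $G_j=\mathcal KF_j$:
\[
\mathcal P_1(\mathcal K W)=\frac{i\ksym(m_c)}{2r_c}\,\widehat W(m_c)\mathbin{\cdot}S\overline{\mathbf q}.
\]
With $S\overline{\mathbf q}=(\overline{v_c},\overline{u_c})^T$, every coefficient becomes $i\ksym(m_c)/(2r_c)$ times a scalar built from the $m_c$-Fourier coefficient of a flux. Taking real parts, we therefore need the imaginary parts of these scalars, and we expect a factor $r_c$ to emerge from each so that it cancels the denominator.

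\emph{Direct terms.} We substitute $\psi_c$ with $\mathbf q_B=(-\overline{u_c},\overline{v_c})^T$ into $F_3$ and extract $[A^2\overline A]$ and $[AB\overline B]$ at mode $m_c$. For $A^2\overline A$ the multinomial counts are $3$, so
\[
[A^2\overline A]\,u^2v\mapsto u_c^2\overline{v_c}+2|u_c|^2v_c,\qquad
[A^2\overline A]\,v^3\mapsto 3|v_c|^2v_c,
\]
and similarly for $u^3$ and $uv^2$. We dot the result with $(\overline{v_c},\overline{u_c})$. The terms in $h_{12}$ and $h_{21}$ give $3|v_c|^2v_c\overline{v_c}$ and $3|u_c|^2u_c\overline{u_c}$, which are real. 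The terms in $h_{11}$ and $h_{22}$ give combinations of $|u_c|^2,|v_c|^2$ times $u_c\overline{v_c}$ or its conjugate. Writing $u_c\overline{v_c}=r_c+i\chi$, the imaginary part collects to a multiple of $(h_{11}-h_{22})\chi$ times $|u_c|^2$ and $|v_c|^2$ factors.

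We then normalize. The critical eigenvector satisfies a relation from $M_{m_c}\mathbf q=i\omega_c\mathbf q$ together with the symbol parity~\eqref{eq:symbol_parity}; alternatively, the normalization~\eqref{eq:adjoint_modes}, via~\eqref{eq:ham_adjoint}, fixes $\mathbf q\cdot S\overline{\mathbf q}=2\pi\cdot 4\pi r_c/(4\pi r_c)$. Concretely, $2r_c\cdot 2\pi\langle\mathbf q,\mathbf q^*\rangle$ gives $u_c\overline{v_c}+v_c\overline{u_c}=2r_c$ after scaling, which is the source of the $r_c$ cancellation. The $[AB\overline B]$ case is identical with counts $6$, and the sign flips coming from $\mathbf q_B$ should produce the factor $-2$ relative to the self case, giving~\eqref{eq:ham_cubic_identity}.

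\emph{Mixed cascade.} First we compute
\[
\hat g_{AB}=i\ksym(2m_c)\,\widehat{[AB]F_2(\psi_c)}(2m_c).
\]
From $F_{2,1}=g_{11}uv$ and $F_{2,2}=\tfrac12(g_{21}u^2+g_{22}v^2)$ this is $i\ksym(2m_c)\bigl(g_{11}(u_c\overline{v_c}-\overline{u_c}v_c),\,-g_{21}|u_c|^2+g_{22}|v_c|^2\bigr)^T$, that is, $i\ksym(2m_c)(2i g_{11}\chi,\ \cdot)$. Next we apply $-M_{2m_c}^{-1}$ in the form~\eqref{eq:inverse_symbol_parity} to get $\Phi_{AB}$. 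We then form the mode-$m_c$ coefficient of $DF_2(\overline B\text{-leg})\Phi_{AB}$, where the $\overline B$ leg is $-R_0\mathbf q$, dot it with $S\overline{\mathbf q}$, and take the imaginary part. The $\varrho_{11}$ and $\varrho_{12}$ entries should enter only through real combinations, leaving $E$ of~\eqref{eq:E_of_L}.

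The main obstacle is showing that the imaginary part factors exactly as $\mathcal D_2E$. Since Theorem~\ref{thm:hamiltonian_flux}\ref{T2} together with Lemma~\ref{lem:defects_variational} already gives vanishing when $g_{22}=g_{11}$, the imaginary part is divisible by $\mathcal D_2$ as a polynomial in the $g$'s, provided it is affine in $g_{22}$ for fixed other data. Using this structure, we would verify only the coefficient of $g_{22}$, or set $g_{11}=0$, to identify the cofactor with $E$. The same divisibility shortcut cross-checks the direct identities.
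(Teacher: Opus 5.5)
Your route is the paper's own (Appendix~\ref{app:computation}). You reduce each real part to $-\frac{\ksym(m_c)}{2r_c}\Im\bigl(W\mathbin{\cdot}S\overline{\mathbf q}\bigr)$, extract $W$ from the explicit flux, obtain $\Phi_{AB}$ from $-M_{2m_c}^{-1}$ in the form~\eqref{eq:inverse_symbol_parity}, and pair with the $\overline B$ leg $-R_0\mathbf q$. However, the mechanism you give for the direct terms is wrong. Write $\theta=u_c\overline{v_c}$. The $h_{11},h_{22}$ part of $W^{\mathrm s}\mathbin{\cdot}S\overline{\mathbf q}$ is $h_{11}(\theta^2+2\abs{\theta}^2)+h_{22}(\overline\theta^2+2\abs{\theta}^2)$. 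The $\abs{u_c}^2\abs{v_c}^2=\abs{\theta}^2$ pieces are real, so the imaginary part is $\mathcal D_3\Im\theta^2=2r_c\chi\,\mathcal D_3$. The $r_c$ that cancels the prefactor is the $\Re\theta$ in $\Im\theta^2=2\Re\theta\,\Im\theta$, and no normalization enters. The biorthogonality in~\eqref{eq:adjoint_modes} holds automatically for every rescaling of $\mathbf q$ once $\mathbf q^*$ is given by~\eqref{eq:ham_adjoint}, and $u_c\overline{v_c}+\overline{u_c}v_c=2r_c$ is just the definition of $r_c$. Had the imaginary part been $\chi$ times $\abs{u_c}^2$- or $\abs{v_c}^2$-type factors, no normalization could rescue the identities. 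Ratios such as $\abs{u_c}^2/r_c$ are invariant under $\mathbf q\mapsto c\mathbf q$ and vary with $L_\lambda$. Likewise, the factor $-2$ for $AB\overline B$ is not just sign flips. The $\abs{u_c}^2v_c$ and $u_c\abs{v_c}^2$ contributions cancel pairwise, leaving $W^{\mathrm m}\mathbin{\cdot}S\overline{\mathbf q}=-2h_{11}\theta^2-2h_{22}\overline\theta^2+6h_{12}\abs{v_c}^4+6h_{21}\abs{u_c}^4$.

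The cascade identity, which you rightly call the main obstacle, is not actually proved. Your shortcut assumes $\Im(W^{\mathrm c}\mathbin{\cdot}S\overline{\mathbf q})$ is affine in $g_{22}$. In fact it is a quadratic form in $(g_{11},g_{21},g_{22})$: $g_{22}$ enters both the forcing $\hat g_{AB}$ and the outer bilinear map, so a $g_{22}^2$ term appears. Divisibility by $g_{22}-g_{11}$ still follows from vanishing on that hyperplane (take the remainder in $g_{22}$). But the cofactor is then an unknown linear form in three variables, and setting $g_{11}=0$ determines only its $g_{21}$- and $g_{22}$-coefficients. It misses the term $-2\varrho_{21}g_{11}\chi$ of $E$, which is exactly what the proof of Theorem~\ref{thm:converse} needs when $(g_{21},g_{22})=(0,0)$. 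Invoking Theorem~\ref{thm:hamiltonian_flux}\ref{T2} also gives up the independence the paper uses when it notes that this proposition re-proves that theorem for derivative-free fluxes.

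The direct computation is short. By~\eqref{eq:zero_mode_vanishing} only the splitting $\overline B\times AB$ survives, contributing $W^{\mathrm c}=2\mathcal B(-R_0\mathbf q,\Phi_{AB})$, with $\mathcal B$ the symmetric bilinear map of $F_2$. Put $\Gamma=g_{11}\chi$ and $\Delta=\tfrac12(g_{22}\abs{v_c}^2-g_{21}\abs{u_c}^2)$, so that $\hat g_{AB}=2i\ksym(2m_c)(i\Gamma,\Delta)^T$. Then $\Phi_{AB}=(\varphi_1,i\varphi_2)^T$ with $\varphi_1=2\ksym(2m_c)(\varrho_{11}\Gamma+\varrho_{12}\Delta)$ and $\varphi_2=-2\ksym(2m_c)(\varrho_{22}\Delta-\varrho_{21}\Gamma)=-\ksym(2m_c)E$, both real. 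This gives $W^{\mathrm c}\mathbin{\cdot}S\overline{\mathbf q}=\varphi_1(g_{11}\abs{v_c}^2-g_{21}\abs{u_c}^2)+i\varphi_2(g_{22}\overline\theta-g_{11}\theta)$. Its imaginary part is $\varphi_2\mathcal D_2r_c$. This shows at once why $\varrho_{11},\varrho_{12}$ drop out and where $r_c$ comes from, and it yields the full $E$, including its $\varrho_{21}$ term.
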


\begin{proof}
Evaluating the extraction formulas~\eqref{eq:direct_coefficient_extraction}--\eqref{eq:cascade_coefficient_extraction} on the flux~\eqref{eq:general_equivariant_flux} and solving the homological equations~\eqref{eq:self_second_harmonic}--\eqref{eq:mixed_second_harmonic} at mode $2m_c$ gives the three identities.
The calculation is in Appendix~\ref{app:computation}.
\end{proof}

Since variational fluxes have $\mathcal D_2=\mathcal D_3=0$ by Lemma~\ref{lem:defects_variational}, Proposition~\ref{prop:ham_explicit_defects} implies Theorem~\ref{thm:hamiltonian_flux} in the derivative-free case.

The invariant $\chi$ vanishes exactly when $M_{m_c}$ has vanishing diagonal, equivalently when $\mathbf q$ is a complex multiple of a real vector.
In that case $\mathcal D_3$ is invisible to $\zetaR$ and $\xiR$.

If $G_2$, $G_3$ are not both variational, there may still exist an admissible $L_\lambda$ with $\xiR=\Re\hat c_{1,2}+\Re\hat s_{1,2}=0$.
We now construct a family of admissible linear parts such that $\xiR=0$ on the whole family forces $G_2$ and $G_3$ to be variational.

\begin{lemma}\label{lem:witness}
Let $G$ satisfy Assumption~\ref{ass:S}\ref{S1}--\ref{S3}, fix $m_c\ge1$ and an even integer $N$ with $2N-1\ge r_G$, and let $r,\nu>0$ and $\abs a<1$.
Then
\[
L_\lambda
=\begin{pmatrix}\tau_\lambda+a&r\partial_x\\ r^{-1}\partial_x&\tau_\lambda-a\end{pmatrix},
\qquad
\tau_\lambda=\lambda-\nu(-\partial_x^2-m_c^2)^N,
\]
is admissible for $G$ with critical wavenumber $m_c$ and $\lambda_c=0$, and, with the critical eigenvector normalized by $v_c=1$, its invariants are
\begin{equation}\label{eq:witness_invariants}
\chi=-\frac{ar}{m_c},
\qquad
\abs{u_c}^2=r^2,
\qquad
\abs{v_c}^2=1,
\qquad
\varrho_{21}=-\frac{2m_c/r}{d^2+4m_c^2-a^2},
\qquad
\varrho_{22}=\frac{a-d}{d^2+4m_c^2-a^2},
\end{equation}
with $d=\nu(3m_c^2)^N>0$.
\end{lemma}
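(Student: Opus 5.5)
The plan is to verify Assumptions~\ref{ass:D},~\ref{ass:S} and~\ref{ass:H} directly from the Fourier symbol, and then read off the invariants from explicit $2\times2$ linear algebra at the modes $m_c$ and $2m_c$.

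\emph{Fourier symbol.} With $\partial_x\mapsto im$, the symbol is
\[
M_m(\lambda)=\begin{pmatrix}\tau_m+a&irm\\ im/r&\tau_m-a\end{pmatrix},
\qquad
\tau_m=\lambda-\nu(m^2-m_c^2)^N .
\]
Hence
\[
\tr M_m=2\tau_m,
\qquad
\det M_m=\tau_m^2-a^2+m^2 .
\]

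\emph{Assumption~\ref{ass:D}.} The coefficients are real. The diagonal entries have principal part of order $2N$ with coefficient $\pm\nu\ne0$, while the off-diagonal entries have order $1<2N$. This gives~\ref{D} with $m_L=N$. Since $N\ge2$ is even, $2N-1\ge1$.

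For~\ref{E} I use the equivalent parity~\eqref{eq:symbol_parity}. Conjugation by $R_0$ flips the signs of the off-diagonal entries, which are imaginary and odd in $m$. The diagonal entries are real and even in $m$. Together these give $R_0M_mR_0=M_{-m}=\overline{M_m}$.

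Since $\lambda$ enters only the zeroth-order term, $r_\lambda=0\le2N-1$, which is~\ref{P}. Condition~\ref{S4} is exactly the hypothesis $2N-1\ge r_G$, and~\ref{S1}--\ref{S3} are assumed for $G$.

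\emph{Assumption~\ref{ass:H} at $\lambda_c=0$.} At $m=\pm m_c$ we have $\tau_m=0$, so the trace vanishes. For $m\ne0$,
\[
\det M_m=\tau_m^2+m^2-a^2\ge1-a^2>0,
\]
which gives~\ref{H1}. The derivative $\tfrac{d}{d\lambda}\tr M_{m_c}=2$ gives~\ref{H2}. Because $N$ is even, $(m^2-m_c^2)^N>0$ for $m\ne0,\pm m_c$, so $\tr M_m<0$; this is~\ref{H3}. Finally
\[
\det M_m/\abs{\tr M_m}\ge\abs{\tau_m}/2\to\infty,
\]
which gives~\ref{H4}.

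\emph{Invariants.} At $m_c$ the symbol is
\[
M_{m_c}=\begin{pmatrix}a&irm_c\\ im_c/r&-a\end{pmatrix},
\qquad
\omega_c=\sqrt{m_c^2-a^2}.
\]
Normalizing $v_c=1$, the second row of $(M_{m_c}-i\omega_c)\mathbf q=0$ gives
\[
u_c=\frac{r(a+i\omega_c)}{im_c}=\frac{r(\omega_c-ia)}{m_c}.
\]
Therefore $\abs{u_c}^2=r^2(\omega_c^2+a^2)/m_c^2=r^2$ and $\chi=\Im(u_c\overline{v_c})=-ar/m_c$.

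At $2m_c$, $\tau_{2m_c}=-\nu(3m_c^2)^N=-d$, and
\[
M_{2m_c}=\begin{pmatrix}a-d&2irm_c\\ 2im_c/r&-a-d\end{pmatrix},
\qquad
\det M_{2m_c}=d^2+4m_c^2-a^2 .
\]
The adjugate formula gives the $(2,1)$ entry $-2im_c/(r\det)$ and the $(2,2)$ entry $(a-d)/\det$. Comparing with~\eqref{eq:inverse_symbol_parity} yields the stated $\varrho_{21}$ and $\varrho_{22}$.

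\emph{Main obstacle.} The computations above are routine. The only point needing care is consistency of sign conventions. The eigenvalue labeled $\beta_{m_c,1}=+i\omega_c$ must be the one used, with $\mathbf q_B$ then fixed by~\eqref{eq:critical_phase_convention}. I would also check that the normalization $v_c=1$ is compatible with the biorthogonality~\eqref{eq:adjoint_modes}, which only rescales $\mathbf q^*$. If the opposite labeling were taken, $\chi$ would change sign, so I would double-check this against the eigen-equation.
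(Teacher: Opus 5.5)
Your proposal is correct and follows the paper's own proof essentially step by step: the symbol with its trace and determinant, the checks of Assumptions~\ref{ass:D} and~\ref{ass:H}, and the explicit eigenvector at $m_c$ and adjugate inverse at $2m_c$. The only differences are minor. For~\ref{H4} you give the lower bound $\det M_m/\abs{\tr M_m}\ge\abs{\tau_m}/2$ where the paper uses asymptotics, and this is slightly cleaner. Your ``$\pm\nu$'' for the diagonal principal coefficient is, for even $N$, in fact $-\nu$ on both diagonal entries, though~\ref{D} only needs it to be nonzero.
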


The verification of Assumptions~\ref{ass:H} and~\ref{ass:D} and the computation of the invariants are in Appendix~\ref{app:converse}.
The lemma proves Proposition~\ref{prop:witness}.

\begin{proof}[Proof of Theorem~\ref{thm:converse}]
That variationality forces uniform cancellation is Theorem~\ref{thm:hamiltonian_flux}.
For the converse implication, write $\ksym_1=\ksym(m_c)=m_cP(m_c^2)$ and $\ksym_2=\ksym(2m_c)=2m_cP(4m_c^2)$.
The hypothesis $P(m_c^2)P(4m_c^2)\ne0$ gives $\ksym_1\ksym_2\ne0$.
Adding~\eqref{eq:ham_quadratic_identity} and~\eqref{eq:ham_cubic_identity} gives, for every admissible $L_\lambda$,
\begin{equation}\label{eq:converse_master}
\xiR
=2\ksym_1\chi\,\mathcal D_3
+\frac{\ksym_1\ksym_2}2\,\mathcal D_2\,E,
\end{equation}
with $E$ as in~\eqref{eq:E_of_L}.
For a fixed flux, only $\chi$, $\varrho_{21}$, $\varrho_{22}$, $u_c$, $v_c$ vary with $L$.
By assumption $\xiR=0$ for the witness family $L_{a,r,\nu}$ of Lemma~\ref{lem:witness}, with invariants~\eqref{eq:witness_invariants}.
The proof uses two features: $\chi$ vanishes exactly when $a=0$, and it is independent of $\nu$, whereas $\varrho_{21}$ is strictly monotone in $\nu$.

Suppose first $(g_{21},g_{22})\ne(0,0)$.
Set $a=0$ and choose $r>0$ so that $g_{22}-g_{21}r^2\ne0$.
Then $\chi=0$ and $E=\varrho_{22}(g_{22}-g_{21}r^2)\ne0$ since $\varrho_{22}=-d/(d^2+4m_c^2)\ne0$, so~\eqref{eq:converse_master} and $\ksym_1\ksym_2\ne0$ give $\mathcal D_2=0$.
Taking any $0<\abs a<1$ next gives $\chi\ne0$, so $\mathcal D_3=0$.

Suppose instead $(g_{21},g_{22})=(0,0)$, so $\mathcal D_2=-g_{11}$ and $E=-2\varrho_{21}g_{11}\chi$.
Fix $r$ and $0<\abs a<1$ and vary $\nu$.
Then $\chi\ne0$ is fixed, while~\eqref{eq:converse_master} gives
\[
0=\xiR
=\ksym_1\chi\bigl[2\mathcal D_3-\ksym_2\varrho_{21}g_{11}\mathcal D_2\bigr],
\qquad\text{hence}\qquad
2\mathcal D_3=\ksym_2\varrho_{21}g_{11}\mathcal D_2 .
\]
Since $\varrho_{21}$ is nonconstant in $\nu$, two values give $g_{11}\mathcal D_2=-g_{11}^2=0$.
Thus $\mathcal D_2=0$ and then $\mathcal D_3=0$.

For the last assertion, suppose~\eqref{eq:multiplier_nondegeneracy} fails.
If $P(m_c^2)=0$, every cubic coefficient vanishes by Proposition~\ref{prop:degeneracies}\ref{Dg1}, so $\xiR=0$ for every admissible $L_\lambda$ whatever $G_2$, $G_3$ are, in particular for non-variational ones.
If $P(m_c^2)\ne0$ but $P(4m_c^2)=0$, then $\xiR=\Re\hat s_{1,2}$ by Proposition~\ref{prop:degeneracies}\ref{Dg2}, which by~\eqref{eq:direct_coefficient_extraction} is independent of $F_2$ and vanishes for variational $F_3$ by~\eqref{eq:ham_cubic_identity}.
So $\xiR=0$ for every admissible $L_\lambda$ when $G_3$ is variational and $G_2$ is not.
\end{proof}

\section{Examples and a counterexample}\label{sec:scope}

\begin{example}[An explicit derivative-dependent family]\label{ex:derivative_family}
Fix $m_c\ge1$, $\nu>0$, $\gamma\ne0$, real coefficients $a_1,\ldots,a_4$, and let
\begin{equation}\label{eq:derivative_model}
L_\lambda
=\bigl[\lambda-\nu(-\partial_x^2-m_c^2)^2\bigr]I
+\gamma S\partial_x,
\qquad
G=S\partial_x\left(
\frac{\delta\mathsf H_3}{\delta U}
+\frac{\delta\mathsf H_4}{\delta U}
\right),
\end{equation}
where $\mathcal K=\partial_x$,
\begin{equation}\label{eq:derivative_model_hamiltonians}
\mathsf H_3(U)
=\int_{\mathbb T}\left(
\frac{a_1}{6}u^3+\frac{a_2}{2}uv^2
+\frac{a_3}{2}u u_x^2+\frac{a_4}{2}u v_x^2
\right)\,dx,
\end{equation}
and $\mathsf H_4$ is any $R$-invariant quartic local Hamiltonian whose density involves at most first derivatives.
Then:
\begin{enumerate}[label=(\roman*)]
\item $(L_\lambda,G)$ is admissible with critical wavenumber $m_c$ and $\lambda_c=0$, and $G$ is variational for $\mathcal K=\partial_x$;
\item independently of $\mathsf H_4$,
\begin{align*}
(a_1,a_2,a_3,a_4)&=(1-3m_c^2,1,1,0)
&\Longrightarrow\quad
\zetaR&<0,\\
(a_1,a_2,a_3,a_4)&=(0,1,-m_c^{-2},0)
&\Longrightarrow\quad
\zetaR&>0,
\end{align*}
so the standing wave is supercritical and stable, respectively subcritical and unstable.
\end{enumerate}
\end{example}

\begin{proof}
The symbol of $L_\lambda$ and its eigenvalues are
\[
M_m=\tau_m I+i\gamma mS,
\qquad
\tau_m=\lambda-\nu(m^2-m_c^2)^2,
\qquad
\beta_{m,\pm}=\tau_m\pm i\gamma m.
\]
For~(i), the displayed eigenvalues verify Assumption~\ref{ass:H} at $\lambda_c=0$ with $\omega_c=\abs{\gamma}m_c$, the determinant condition of~\ref{H1} holding because $\det M_m=\tau_m^2+\gamma^2m^2>0$ for $m\ne0$.
The principal part $-\nu\partial_x^4I$ is diagonal and nondegenerate while the off-diagonal term has order one and the parameter enters at order zero, so Assumption~\ref{ass:D} holds with $m_L=2$.
Finally, $\mathsf H_3$, $\mathsf H_4$ are $R$-invariant with $r_G\le3=2m_L-1$, so Assumption~\ref{ass:S} holds.
For~(ii), the extraction formulas of~\S\ref{subsec:extraction}, with critical eigenvector $\mathbf q=(\operatorname{sgn}\gamma,1)^T$, give
\[
\zetaR=-\frac{4}{9\nu m_c^2}
\qquad\text{and}\qquad
\zetaR=\frac{36\nu m_c^4}{16\gamma^2+81\nu^2m_c^6}.
\]
\end{proof}

The above family has a physical interpretation: for $a_2=a_4=0$, $\mathsf H_3$ is the cubic part of the capillary energy $\int_{\mathbb T}\bigl(\tfrac12v^2+W(u)+\tfrac12K(u)u_x^2\bigr)\,dx$, so $a_3=K'(0)$ measures state-dependent capillarity.

\begin{example}[$p$-system]\label{ex:p_system}
The fourth-order diffusive regularization studied in~\cite{liyao2015,yao20142},
\begin{equation}\label{eq:regularized_p_system}
\partial_tu-\partial_xv=-a\partial_x^4u,
\qquad
\partial_tv-\partial_x\sigma(u)=-\lambda\partial_x^2v-\partial_x^4v,
\end{equation}
has the nonlinearity of the inviscid $p$-system $u_t=v_x$, $v_t=\sigma(u)_x$.
Let $\sigma(u)=c^2u+\tfrac12u^2+\tfrac{\eta_p}3u^3$ with $c>0$, and let $\lambda$ be the bifurcation parameter.
Then:
\begin{enumerate}[label=(\roman*)]
\item $G_2$ and $G_3$ are variational for $\mathcal K=\partial_x$;
\item the pair is admissible with $m_c=1$ for $0<a<c$ and $c^2>4a(a+1)^3/27$;
\item $\xiR=0$, and $\zetaR$ is independent of $\eta_p$.
\end{enumerate}
\end{example}

\begin{proof}
For~(i), with $W'=\sigma$ and $H=\tfrac12v^2+W(u)$ the $p$-system reads $U_t=S\partial_x\nabla_UH$, so
\[
G_j=S\partial_x\,\frac{\delta\mathsf H_{j+1}}{\delta U},
\qquad
\mathsf H_3=\frac16\int_{\mathbb T}u^3\,dx,
\qquad
\mathsf H_4=\frac{\eta_p}{12}\int_{\mathbb T}u^4\,dx .
\]
Item~(ii) is~\cite[\S\S5.1, 6.1]{ozersengultiryakioglu2026}, and~(iii) is Theorem~\ref{thm:hamiltonian_flux}\ref{T3}.
\end{proof}

Item~(iii) comprises the two observations of~\cite{yao20142,liyao2015} recalled in the introduction.
See also~\cite[Eq.~(45)]{ozersengultiryakioglu2026}.

The third model is also from the literature, but its linear part fails Assumption~\ref{ass:D} and must be augmented before the theorem applies.
Section~\ref{sec:boussinesq} studies it near its Hamiltonian limit.

\begin{example}[A Boussinesq flux under a dissipative regularization]\label{ex:boussinesq}
Consider, on the $2\pi$-periodic mean-zero domain,
\[
v_t=p_x-a\,v_{xxxx},
\qquad
p_t=\partial_x\bigl(c_0^2v-\tfrac\alpha2v^2-\beta v_{xx}\bigr)-\alpha_2p_{xx}-\alpha_4p_{xxxx},
\]
with $\alpha\ne0$, $\alpha_4>0$, $c_0^2>0$, $\beta\ge0$ and $a\ge0$.
The case $a=0$ is the Boussinesq model of Christov, Maugin and Porubov~\cite{christovmauginporubov2007}.
Take $\lambda=\alpha_2$ and write $L^a_\lambda$ for the linear part.
Then:
\begin{enumerate}[label=(\roman*)]
\item $G=G_2$ is variational for $\mathcal K=\partial_x$;
\item $(L^0_\lambda,G)$ is not admissible for any $\beta\ge0$;
\item for $0<a<\min\bigl\{3\alpha_4,\sqrt{c_0^2+\beta}\bigr\}$, the pair $(L^a_\lambda,G)$ is admissible with $m_c=1$ and $\lambda_c=a+\alpha_4$ for every $\beta\ge0$.
\end{enumerate}
\end{example}

\begin{proof}
For~(i), $G=S\partial_x\,\delta\mathsf H_3/\delta U$ with $\mathsf H_3=-\tfrac\alpha6\int_{\mathbb T}v^3\,dx$.
For~(ii) and~(iii), the symbol of $L^a_\lambda$ is
\[
M_m=\begin{pmatrix}-am^4&im\\ im(c_0^2+\beta m^2)&\alpha_2m^2-\alpha_4m^4\end{pmatrix},
\]
so at $\lambda=a+\alpha_4$
\[
\tr M_m=m^2(a+\alpha_4)(1-m^2),
\qquad
\det M_m=m^2\bigl[c_0^2+\beta m^2+am^4\bigl(\alpha_4(m^2-1)-a\bigr)\bigr],
\]
and $\partial_\lambda\tr M_1=1$.
For~(ii), at $a=0$ the $(1,1)$ entry vanishes, so~\ref{D} fails.
The trace vanishes only at $m=\pm1$, $\det M_m=m^2(c_0^2+\beta m^2)>0$, and $\det M_m/\abs{\tr M_m}\to\beta/\alpha_4$, so $L^0_\lambda$ satisfies Assumption~\ref{ass:H} with $m_c=1$ and $\lambda_c=\alpha_4$ exactly when $\beta>0$, the spectral gap~\ref{H4} closing at $\beta=0$.
For~(iii), the trace is negative off $m=\pm1$, the determinant is positive at $m=\pm1$ by $a^2<c_0^2+\beta$ and for $\abs m\ge2$ by $a<3\alpha_4$, and $\det M_m/\abs{\tr M_m}\to\infty$, so Assumption~\ref{ass:H} holds.
Assumption~\ref{ass:D} holds with $m_L=2$, the principal part $\operatorname{diag}(-a,-\alpha_4)\partial_x^4$ being nondegenerate with the off-diagonal terms of order three and $\lambda$ entering at order two, and~\ref{S4} holds with $r_G=1$.
\end{proof}

We do not claim that the regularization $-a\,v_{xxxx}$ has any physical meaning.
Its precedent is the diffusion coefficient $a$ of~\cite{yao20142}.

The last example is a non-variational flux that does not alter the cubic normal-form coefficients, so Theorem~\ref{thm:converse} does not extend to derivative-dependent fluxes.

\begin{example}[A derivative-dependent null form]\label{ex:null_form}
Let
\[
F_2(U)=\bigl(0,\mathcal N(u,u)\bigr)^T,
\qquad
F_3=0,
\qquad
\mathcal N(f,g)=2\partial_x^4(fg)-7\partial_x^2(f_xg_x)-4f_{xx}g_{xx}.
\]
Then:
\begin{enumerate}[label=(\roman*)]
\item $G_2=\mathcal KF_2$ satisfies Assumption~\ref{ass:S}\ref{S1}--\ref{S3} with $r_G=\operatorname{ord}\mathcal K+4$;
\item $G_2$ is not variational;
\item for all fluxes $F_2'$, $F_3'$ and every $L_\lambda$ admissible for both, the nonlinearities $\mathcal KF_2+\mathcal KF_2'+\mathcal KF_3'$ and $\mathcal KF_2'+\mathcal KF_3'$ have the same cubic normal-form coefficients~\eqref{eq:direct_coefficient_extraction}--\eqref{eq:cascade_coefficient_extraction}, and in particular, for $F_2'=F_3'=0$ all four vanish.
\end{enumerate}
\end{example}

\begin{proof}
For~(i), $\mathcal KF_2$ is $R$-equivariant since every term of $\mathcal N$ has even total differential order, and Proposition~\ref{prop:witness} supplies admissible operators.
For~(iii), the symbol $b$ of $\mathcal N$, defined by $\mathcal N(e^{ik_1x},e^{ik_2x})=b(k_1,k_2)e^{i(k_1+k_2)x}$, is
\[
b(k_1,k_2)=(k_1-k_2)^2(2k_1+k_2)(k_1+2k_2),
\qquad\text{so}\qquad
b(\pm m_c,\pm m_c)=b(\mp m_c,\pm2m_c)=0 ,
\]
while $(\pm m_c,\mp m_c)$ has output wavenumber $0$, annihilated by $\mathcal K$ by~\eqref{eq:K_symbol}.
Hence $\hat g_{AA}=\hat g_{AB}=0$, both being multiples of $b(m_c,m_c)$ since the $A$ and $B$ legs of~\eqref{eq:center_linear_parametrization} both carry wavenumber $m_c$.
So the center-manifold term of $\mathcal KF_2$ alone vanishes and, with $F_3=0$, so do its four coefficients.
Let $h_2'$ be the center-manifold term of $\mathcal KF_2'$.
The forcing~\eqref{eq:quadratic_forcing_definition} is linear in $G_2$, so for each monomial $XY$
\[
\hat g_{XY}(\mathcal KF_2'+\mathcal KF_2)=\hat g_{XY}(\mathcal KF_2')+\hat g_{XY}(\mathcal KF_2)=\hat g_{XY}(\mathcal KF_2'),
\qquad\text{hence}\qquad
h_2=h_2'
\]
by~\eqref{eq:quadratic_homological_equation}, and, for $n=1,2$,
\[
\mathcal P_n\bigl(\mathcal K\,DF_2(\psi_c)\,h_2'\bigr)=0,
\]
since $h_2'$ is supported on modes $\pm2m_c$ by~\eqref{eq:zero_mode_vanishing}, the mode-$\pm m_c$ output of $DF_2(\psi_c)h_2'$ comes from the interactions $(\mp m_c,\pm2m_c)$ with $b(\mp m_c,\pm2m_c)=0$, and the interactions $(\pm m_c,\pm2m_c)$ produce only modes $\pm3m_c$, which the critical projections annihilate.
The direct coefficients depend on $F_3'$ alone.
For~(ii), $F_2$ is the only differential-polynomial flux of $G_2$: a quadratic differential polynomial $Q$ with $\mathcal KQ(U)=0$ for all $U$ has a polynomial symbol vanishing off the finitely many lines $k_1+k_2=m$ with $\ksym(m)=0$, so $Q=0$.
If $SF_2=(\mathcal N(u,u),0)^T=\delta\mathsf H_3/\delta U$ for a cubic $\mathsf H_3$ as in~\eqref{eq:local_hamiltonian}, then $\mathsf H_3$ depends on $u$ alone and $T(f,g,h)=\int_{\mathbb T}h\,\mathcal N(f,g)\,dx=\tfrac12D^3\mathsf H_3(f,g,h)$ is symmetric.
Since $T(e^{ik_1x},e^{ik_2x},e^{ik_3x})=2\pi\,b(k_1,k_2)$ for $k_1+k_2+k_3=0$, this forces $b(k_1,k_2)=b(k_2,k_3)$, whereas
\[
b(1,2)=20\ne-100=b(2,-3).
\qedhere
\]
\end{proof}

Whether a lower-order (with $r_G<\operatorname{ord}\mathcal K+4$) non-variational flux with the same property exists is part of Problem~\ref{prob:kernel}.

\section{The cubic coefficient near the Hamiltonian limit}\label{sec:boussinesq}

In this section we regard~\eqref{main} as a dissipative linear perturbation of a Hamiltonian system and follow $\zetaR$ as the perturbation is removed.
By Theorem~\ref{thm:hamiltonian_flux} it is the only possibly non-vanishing real cubic coefficient, and by Remark~\ref{rem:hamiltonian_limit} it vanishes in the nonresonant Hamiltonian limit, so it is generated by the dissipation alone.
The question is how it scales with the dissipation.
We call $\abs{\zetaR}$ the \emph{saturation strength}, since the bifurcating amplitudes are $\sqrt{\abs\mu/\abs{\zetaR}}$ to leading order in the coordinates of~\eqref{reduced_system}.

\begin{definition}[Perturbed-Hamiltonian family]\label{def:perturbed_hamiltonian}
A \emph{perturbed-Hamiltonian family} is a two-parameter family $L_{\lambda,\epsilon}=H+D_{\lambda,\epsilon}$, $\epsilon>0$, such that
\begin{enumerate}[label=(\alph*)]
\item $HU=S\mathcal K\,\delta\mathsf H_2/\delta U$ for a quadratic $\mathsf H_2$ as in~\eqref{eq:local_hamiltonian}, and $G_2$, $G_3$ are variational for the same $\mathcal K$;
\item $(L_{\cdot,\epsilon},G)$ is admissible for every $\epsilon>0$, with critical parameter $\lambda_c(\epsilon)$;
\item $D_{\lambda_c(\epsilon),\epsilon}\to0$ coefficientwise as $\epsilon\to0$.
\end{enumerate}
Write $\pm i\omega_m$ for the eigenvalues of the symbol of $H$ at mode $m$ when they are purely imaginary, so that $\omega_{m_c}=\lim_{\epsilon\to0}\omega_c$.
The \emph{detuning} of the doubled mode is
\begin{equation}\label{eq:detuning}
\delta:=2\omega_{m_c}-\omega_{2m_c}.
\end{equation}
\end{definition}

\begin{remark}\label{rem:hamiltonian_limit}
As $\epsilon\to0$ the spectral gap closes and Proposition~\ref{thm:reduction} no longer applies.
Nevertheless, for $L=H$ with $\omega_{m_c}>0$ and $\omega_{2m_c}\notin\{0,2\omega_{m_c}\}$ the resolvents in~\eqref{eq:self_second_harmonic}--\eqref{eq:mixed_second_harmonic} exist, so the extraction formulas~\eqref{eq:direct_coefficient_extraction}--\eqref{eq:cascade_coefficient_extraction} still apply.
Since the critical eigenvalues $\pm i\omega_{m_c}$ are simple, the critical space is symplectic, so the formal cubic dynamics is the Hamiltonian $O(2)$ normal form~\cite[\S2, Eqs.~(2.2) and~(2.6)]{knoblochmahalovmarsden1994}, see also~\cite{meyerhalloffin2009,chossat2002hamiltonian}, in which the amplitude moduli are conserved: $\zetaR=\xiR=0$.
\end{remark}

\begin{proposition}[The Boussinesq family near the Hamiltonian limit]
\label{prop:boussinesq_ray}
The augmented Boussinesq pair of Example~\ref{ex:boussinesq}(iii) forms a perturbed-Hamiltonian family as $\epsilon:=a+\alpha_4\to0$, with $\mathsf H_2=\int_{\mathbb T}\bigl(\tfrac12p^2+\tfrac{c_0^2}2v^2+\tfrac\beta2v_x^2\bigr)dx$, $\mathsf H_3=-\tfrac\alpha6\int_{\mathbb T}v^3\,dx$ and $\omega_m=m\sqrt{c_0^2+\beta m^2}$, so its doubled mode is $2{:}1$ resonant exactly when $\beta=0$.
Let $(L^a_\lambda,G)$ be that pair at onset, $\alpha_2=\lambda_c=\epsilon$, with the critical eigenvector normalized by its second component.
For every admissible $\epsilon>0$, $\zetaR<0$, so the standing wave is the selected branch for every $\beta\ge0$.
Along any such path with $\epsilon\to0$, with constants $C_\beta,C_0>0$ depending on $\alpha,c_0,\beta$ only:
\begin{enumerate}[label=(\roman*)]
\item\label{B1} For $\beta>0$ the detuning~\eqref{eq:detuning} is nonzero and
\[
\zetaR=-C_\beta\,\epsilon+O(\epsilon^3).
\]
\item\label{B2} For $\beta=0$ the detuning vanishes and
\[
\zetaR=-C_0\,\epsilon^{-1}+O(\epsilon).
\]
\end{enumerate}
\end{proposition}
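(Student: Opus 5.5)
By Theorem~\ref{thm:hamiltonian_flux}\ref{T3}, and since $G_3=0$, we have $\zetaR=\Re\hat c_{1,1}$. The plan is therefore to compute the self-cascade term of~\eqref{eq:cascade_coefficient_extraction} in closed form for the augmented Boussinesq pair and then expand it in $\epsilon$.

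\emph{Perturbed-Hamiltonian structure.} First I check Definition~\ref{def:perturbed_hamiltonian}. In the variables $U=(v,p)$ the operator $S\partial_x\,\delta\mathsf H_2/\delta U$ has symbol $\begin{psmallmatrix}0&im\\ im(c_0^2+\beta m^2)&0\end{psmallmatrix}$, with eigenvalues $\pm i\omega_m$ and $\omega_m=m\sqrt{c_0^2+\beta m^2}$. The nonlinearity is $G=S\partial_x\,\delta\mathsf H_3/\delta U$, as in Example~\ref{ex:boussinesq}(i). The remainder is $D=\operatorname{diag}(-a\partial_x^4,\ \alpha_2\partial_x^2-\alpha_4\partial_x^4)$. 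At onset $\alpha_2=\epsilon$ and $a,\alpha_4\le\epsilon$, so every coefficient of $D$ is $O(\epsilon)$. Admissibility for each $\epsilon>0$ is Example~\ref{ex:boussinesq}(iii). The detuning is $\delta=2\sqrt{c_0^2+\beta}-2\sqrt{c_0^2+4\beta}$, which vanishes exactly when $\beta=0$.

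\emph{Explicit formula.} Set $\kappa=c_0^2+\beta$. At onset
\[
M_1=\begin{pmatrix}-a&i\\ i\kappa&a\end{pmatrix},
\qquad
\omega_c^2=\kappa-a^2 .
\]
Normalizing $v_c=1$ gives $u_c=(i\omega_c-a)/(i\kappa)$, hence $r_c$. The forcing is $\hat g_{AA}=(0,-i\alpha)^T$, since $\ksym(2)=2$ and $[A^2](v^2)=1$. Then $\Phi_{AA}=(2i\omega_cI-M_2)^{-1}\hat g_{AA}$ with
\[
M_2=\begin{pmatrix}-16a&2i\\ 2i(c_0^2+4\beta)&4a-12\alpha_4\end{pmatrix},
\]
where $4\alpha_2-16\alpha_4=4a-12\alpha_4$ at onset. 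The cascade field at mode $m_c$ is $\partial_x(-\alpha\,\overline{v_c}\,(\Phi_{AA})_2\,e^{i x})$ in the $p$-component; the factor $2$ from $D^2G(0)(\psi_c,h_2)$ cancels the $\tfrac12$. Projecting with~\eqref{eq:swap_cancellation} and~\eqref{eq:ham_adjoint} then gives
\[
\hat c_{1,1}=\frac{\alpha}{2r_c}\,(\Phi_{AA})_2\,\overline{u_c},
\]
up to the sign conventions fixed there. Only one entry of the $2\times2$ inverse enters, namely $(\Phi_{AA})_2=-i\alpha(2i\omega_c+16a)/\Delta$, where $\Delta=\det(2i\omega_cI-M_2)$.

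\emph{Expansion.} I expand $\Delta$ in the dissipation:
\[
\Delta=-4\omega_c^2+4(c_0^2+4\beta)+2i\omega_c(16a-4a+12\alpha_4)+O(\epsilon^2)
=(\omega_2^2-4\omega_1^2)+24i\omega_1\epsilon+O(\epsilon^2).
\]
The key point is that the imaginary part involves only the sum $a+\alpha_4=\epsilon$. The $a$-dependence that appears separately in the numerator, in $u_c$ and in $r_c$ must also be shown to combine into $\epsilon$ at leading order, or to cancel in the real part. This is what makes the constants independent of the path and is the step I expect to be the main obstacle. I will first try to organize $\Re\hat c_{1,1}$ as $\Re\bigl(N/\Delta\bigr)$ with $N=N_0+\epsilon N_1(a/\epsilon)$. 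Remark~\ref{rem:hamiltonian_limit} gives $\Re(N_0/\Delta_0)=0$, so $N_0$ is real up to a factor that is purely imaginary against $\Delta_0$. It then remains to check that the path-dependent pieces of $N_1$ drop out of the real part.

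\emph{The two cases.} For $\beta>0$, $\Delta_0=\omega_2^2-4\omega_1^2\ne0$, and the $O(\epsilon)$ real part gives $\zetaR=-C_\beta\epsilon$. The correction is $O(\epsilon^3)$ rather than $O(\epsilon^2)$ because the formula is odd under $(a,\alpha_4,\epsilon)\mapsto-(a,\alpha_4,\epsilon)$ combined with complex conjugation, so only odd powers of $\epsilon$ survive in the real part. For $\beta=0$, $\Delta_0=0$ and $\Delta=24i\omega_1\epsilon+O(\epsilon^2)$, so $\Re(N_0/\Delta)\sim-C_0/\epsilon$ with $C_0=\alpha^2/(\text{const}\cdot c_0^2)$. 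The same parity argument gives the $O(\epsilon)$ remainder. Finally, for the global sign claim I will write $\zetaR$ as $-\alpha^2\epsilon$ times a manifestly positive rational function of $(\omega_c,a,\alpha_4,\kappa)$, using $a<3\alpha_4$ and $a^2<\kappa$ from Example~\ref{ex:boussinesq}(iii). With $\xiR=0$, Theorem~\ref{thm:hamiltonian_flux}\ref{T4} then selects the standing wave.
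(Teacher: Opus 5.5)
Your overall route is the paper's: evaluate $\hat c_{1,1}$ in closed form through $(2i\omega_cI-M_2)^{-1}$, then expand in $\epsilon$. Your checks of the perturbed-Hamiltonian structure and of the detuning are fine. The central computation, however, is set up on the wrong components, and this is a genuine gap. With $U=(v,p)$, the Boussinesq term $-\tfrac\alpha2\partial_x(v^2)$ depends on $U_1$, whose critical amplitude is $u_c=i/(i\omega_c+a)$; the normalized entry $v_c=1$ is the $p$-amplitude. So $\hat g_{AA}=(0,-i\alpha u_c^2)^T$, not $(0,-i\alpha)^T$, and the outer interaction reads $\overline{u_c}\,(\Phi_{AA})_1$, not $\overline{v_c}\,(\Phi_{AA})_2$. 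What you wrote is in effect the cascade of the non-variational nonlinearity $(0,-\tfrac\alpha2\partial_x(p^2))^T$, whose flux has $\mathcal D_2=-\alpha$. You also drop the factor $i$ that $\partial_x$ produces at mode $m_c$: the prefactor in \eqref{eq:swap_cancellation} is $i\ksym(m_c)/(2r_c)$. That is not a sign convention, and it cannot be dropped when the target is a real part.

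The consequences are concrete. Your expression $\frac{\alpha}{2r_c}(\Phi_{AA})_2\overline{u_c}$ equals $\alpha^2c_\beta/(12\beta)\ne0$ at $\epsilon=0$ (with $c_\beta^2=c_0^2+\beta$, your $\kappa$), which contradicts Remark~\ref{rem:hamiltonian_limit}, the remark you intend to use. Even with the factor $-i$ restored, its $O(\epsilon)$ real part contains a term $-3\alpha^2a/(4\beta)$, so the leading constant depends on $a/\epsilon$. The ``main obstacle'' you leave open is therefore an artifact of the setup and cannot be overcome with your formula. For the same reason your $C_0\propto c_0^{-2}$ has the wrong power, and the promised global sign argument has nothing correct to act on.

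With the correct components there is no obstacle. The only resolvent entry that enters is the $(1,2)$ entry $(M_2)_{12}/\Theta=2i/\Theta$, where $\Theta=\det(2i\omega_cI-M_2)$, and this entry is independent of $a$. The amplitude $u_c$ enters only through $\overline{u_c}^{\,2}u_c^2=\abs{u_c}^4=c_\beta^{-4}$, and $r_c=\omega_c/c_\beta^2$. Hence $\zeta=\hat c_{1,1}=-i\alpha^2/(c_\beta^2\omega_c\Theta)$ exactly, with no path-dependent phase in the prefactor.

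Since $\Theta=12\beta+12a(16\alpha_4-5a)+24i\omega_c\epsilon$ has imaginary part exactly $24\omega_c\epsilon$, one gets $\zetaR=-24\alpha^2\epsilon/(c_\beta^2\abs\Theta^2)$. This is negative for every admissible $\epsilon>0$, because admissibility guarantees $\Theta\ne0$. The path enters only through $O(\epsilon^2)$ terms in $\abs\Theta^2$. Both expansions then follow at once, with $C_\beta=\alpha^2/(6c_\beta^2\beta^2)$ and $C_0=\alpha^2/(24c_0^4)$. Your time-reversal parity argument for the remainders and your appeal to Remark~\ref{rem:hamiltonian_limit} become unnecessary, since the prefactor is purely imaginary identically.
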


The proof, a direct cascade evaluation ending in a closed form for $\zetaR$, is in Appendix~\ref{app:boussinesq}.

\begin{remark}\label{rem:precedents}\leavevmode
\begin{enumerate}[label=(\alph*)]
\item\label{R0}
The detuning vanishes exactly at the $2{:}1$ resonance.
There the shifted resolvent $(2i\omega_cI-M_{2m_c})^{-1}$ of~\eqref{eq:self_second_harmonic}, which exists for every $\epsilon>0$ by Assumption~\ref{ass:H}, is unbounded as $\epsilon\to0$, and this is the source of the divergence in item~\ref{B2} of Proposition~\ref{prop:boussinesq_ray}.
\item\label{R1}
Items~\ref{B1} and~\ref{B2} of Proposition~\ref{prop:boussinesq_ray} are limits along the lines $\beta=\mathrm{const}$ of the quadrant $\epsilon>0$, $\beta\ge0$.
The joint limit $(\beta,\epsilon)\to(0,0)$ is also of interest, since dispersion and damping then compete: by~\eqref{eq:boussinesq_zeta}, $\zetaR\sim-\alpha^2\epsilon/(6c_\beta^2\beta^2)$ when $\beta\gg\epsilon$ and $\zetaR\sim-\alpha^2/(24c_\beta^4\epsilon)$ when $\beta\ll\epsilon$, and the two regimes meet where $\beta$ and $\epsilon$ are comparable.
We do not pursue that limit here (Problem~\ref{prob:uniformity} of Section~\ref{sec:open}).
\item\label{R2}
Resonance does not by itself force the divergence: the shifted resolvent is unbounded only along the eigenvector of $M_{2m_c}$ whose eigenvalue tends to $2i\omega_{m_c}$, and whether the quadratic forcing has a component along it depends on the nonlinearity.
The linear part of Example~\ref{ex:derivative_family} is $2{:}1$ resonant as $\nu\to0$, since $\omega_m=\abs\gamma m$, and its two choices of $\mathsf H_3$ give $\zetaR\propto\nu^{-1}$ and $\zetaR\propto\nu$.
\item\label{R3}
The resonance $\omega(2k)=2\omega(k)$ at $\beta=0$ is that of Wilton's ripples~\cite{mcgoldrick1970}.
The same dichotomy, real cubic coefficients of the order of the damping off resonance and of its inverse at a $2{:}1$ resonance, occurs in weakly damped Faraday waves~\cite{portersilber2002,topazsilber2002}, where a parametric drive couples the two amplitudes.
Here the family is autonomous and $\xiR=0$ throughout.
\end{enumerate}
\end{remark}

\section{Limitations and open problems}\label{sec:open}

The restrictions of Sections~\ref{sec:setting} and~\ref{sec:main} leave three principal questions and several further directions.
\begin{enumerate}[label=\arabic*.,ref=\arabic*]
\item\label{prob:uniformity}
Section~\ref{sec:boussinesq} follows $\zetaR$ to $\epsilon\to0$, but the reduction that gives it meaning holds in a neighborhood that may shrink with $\epsilon$: at the resonance the eliminated doubled mode is damped only at rate $O(\epsilon)$~\cite{rucklidgesilber2009}, and keeping it as a second critical mode~\cite{knoblochproctor1988,leblanclangford1996} does not help at $\beta=0$, where every harmonic is resonant~\cite{schneider2005}.
Is there a reduction uniform in $\epsilon$, at fixed $\beta$ and in the joint limit of Remark~\ref{rem:precedents}\ref{R1}, and does it reproduce the two laws of Proposition~\ref{prop:boussinesq_ray}?

\item
When $\zetaR=0$ for a variational flux, as happens whenever $G_2=0$, both cubic real parts vanish and selection is decided at fifth order or beyond~\cite{crawford1988degenerate,villarsepulveda2024amplitude}.
Which real parts of the fifth-order coefficients vanish when the nonlinearity is Hamiltonian through fifth order?

\item\label{prob:kernel}
Identify all conservative reflection-compatible quadratic and cubic fluxes whose addition to any flux alters neither $\zeta$ nor $\xi$ for any admissible $L_\lambda$.
\end{enumerate}

Several further directions change the setting rather than the question.
\begin{enumerate}[label=(\alph*)]
\item
For $d>2$ the adjoint eigenvector is no longer fixed by reflection alone, and for $d\ge4$ pure damping can destabilize a spectrally stable Hamiltonian equilibrium with indefinite energy~\cite{bkmr1994}.

\item
For Lie--Poisson operators such as that of the Green--Naghdi hierarchy~\cite{matsuno2015hamiltonian}, the linear part of the operator's expansion acting on $\nabla\mathsf H_2$ contributes to $G_2$, and the variational hypothesis must be reformulated.

\item
For nonlocal multipliers whose Fourier symbol is $i\ksym(m)$ with $\ksym$ real and odd, the vertex argument extends unchanged, while the reduction needs Fourier-multiplier hypotheses.
Motivating cases are water waves~\cite{bambusi2021hamiltonian,berti2024hamiltonian}, the Hamiltonian bidirectional Whitham system of~\cite{dinvaydutykhkalisch2019} and the Hamiltonian $abcd$-Boussinesq systems~\cite{kwakmunozpobletepozo2019}, whose Poisson operators are $S\mathcal K$ with $\mathcal K=-\partial_x\mathcal K_0$, $\widehat{\mathcal K_0}(k)=\tanh(Hk)/(Hk)$, respectively $\mathcal K=-(1-\partial_x^2)^{-1}\partial_x$.

\item
Without a spectral gap the center-manifold reduction used here no longer applies and alternative methods need additional hypotheses, as for viscous shocks~\cite{poganyaozumbrun2015} or Cahn--Hilliard fronts~\cite{gohhosek2024}.

\item
With periodic coefficients the homological equations move to Bloch sectors, and with $D_k$ in place of $O(2)$ the normal form may contain additional cubic terms.
\end{enumerate}
Which of these preserve the cancellation, and under what hypotheses?

\section*{Code availability}

The cubic coefficients of~\eqref{reduced_system} can be evaluated with the \texttt{o2sym} package, release~v0.4.1, \url{https://doi.org/10.5281/zenodo.22096063}.
Its test suite samples admissible linear operators and derivative-dependent densities and verifies the defect identities of Proposition~\ref{prop:ham_explicit_defects} numerically.
The package implements the extraction formulas of~\cite{ozersengultiryakioglu2026} rather than the arguments of this paper, so these are independent checks of the identities.
Four short scripts, supplied as supplementary material, check the coefficient formulas of Sections~\ref{sec:scope} and~\ref{sec:boussinesq} and the identities of Proposition~\ref{prop:ham_explicit_defects} directly from the extraction formulas of~\S\ref{subsec:extraction} and the homological equations~\eqref{eq:quadratic_homological_equation}.
\texttt{appendix\_defects\_sym.py} rederives the three identities of Proposition~\ref{prop:ham_explicit_defects} symbolically from~\eqref{eq:general_equivariant_flux} and~\eqref{eq:inverse_symbol_parity}, independently of the proof in Appendix~\ref{app:computation}, and then checks each displayed line of that proof.
\texttt{derivative\_family\_sym.py} reproduces the two values of $\zetaR$ in Example~\ref{ex:derivative_family} exactly, and \texttt{bouss\_sym.py} reproduces the closed form~\eqref{eq:boussinesq_zeta} symbolically.
\texttt{bouss\_num.py} evaluates~\eqref{eq:boussinesq_zeta} numerically by a direct cascade computation and by the matrix resolvents of \texttt{o2sym}.
The two agree to relative $3\times10^{-15}$ over $200$ random admissible parameter sets including $\beta=0$, with $\xiR=0$ to machine precision in every run.
Only \texttt{bouss\_num.py} imports \texttt{o2sym}.
The other three need \texttt{sympy} alone.

\appendix

\section{Proof of Proposition~\ref{prop:ham_explicit_defects}}\label{app:computation}

This appendix proves Proposition~\ref{prop:ham_explicit_defects} in three steps: reduction of the projection to an imaginary part, extraction of the two direct vectors, and the cascade through the second harmonic.
Throughout, $G_2$ and $G_3$ are derivative-free for a common multiplier $\mathcal K=\partial_xP(-\partial_x^2)$, so the flux is~\eqref{eq:general_equivariant_flux} with real coefficients and no variational hypothesis is imposed.
Abbreviate $\ksym_1=\ksym(m_c)$, $\ksym_2=\ksym(2m_c)$ and $\theta:=u_c\overline{v_c}$, so that $r_c=\Re\theta$ and $\chi=\Im\theta$ by~\eqref{eq:qc}.

\subsection{Reduction of the projection to an imaginary part}

Every coefficient below has the same form: a vector $W\in\mathbb C^2$ carried by $e^{im_cx}$, produced by the flux, then acted on by $\mathcal K$ and projected onto the first critical adjoint eigenvector.
We define the complex-linear functional
\begin{equation}\label{eq:app_pairing}
\pair{W}:=W_1\overline{v_c}+W_2\overline{u_c}
=W\mathbin{\cdot}(S\overline{\mathbf q}),
\qquad W\in\mathbb C^2 .
\end{equation}

Applying~\eqref{eq:swap_cancellation} to the field $We^{im_cx}$ gives, for $W\in\mathbb C^2$,
\begin{equation}\label{eq:app_projection}
\mathcal P_1\bigl(\mathcal K(We^{im_cx})\bigr)
=\varkappa\pair{W},
\qquad
\varkappa:=\frac{i\ksym_1}{2r_c},
\qquad
\Re\,\mathcal P_1\bigl(\mathcal K(We^{im_cx})\bigr)
=-\frac{\ksym_1}{2r_c}\,\Im\pair{W},
\end{equation}
the second identity because $\ksym_1$ and $r_c$ are real.

\subsection{The direct coefficients}

We write $\mathbf a=\mathbf q$ and $\mathbf b=\mathbf q_B=-R_0\overline{\mathbf q}=(-\overline{u_c},\overline{v_c})^T$ for the vectors carried by $A$ and $B$ in~\eqref{eq:center_linear_parametrization}, so that $\overline A$ and $\overline B$ carry $\overline{\mathbf a}$ and $\overline{\mathbf b}=(-u_c,v_c)^T$.
By~\eqref{eq:direct_coefficient_extraction}, $\hat s_{1,1}$ and $\hat s_{1,2}$ are~\eqref{eq:app_projection} applied to the coefficients $W^{\mathrm s}$ of $A^2\overline A$ and $W^{\mathrm m}$ of $AB\overline B$ in $F_3(\psi_c)$.
Direct extraction from $F_3$ on the legs of~\eqref{eq:center_linear_parametrization}, followed by~\eqref{eq:app_pairing}, gives
\[
\begin{aligned}
\pair{W^{\mathrm s}}
&=h_{11}\bigl(\theta^2+2\abs\theta^2\bigr)
+h_{22}\bigl(\overline\theta^2+2\abs\theta^2\bigr)
+3h_{12}\abs{v_c}^4+3h_{21}\abs{u_c}^4,\\
\pair{W^{\mathrm m}}
&=-2h_{11}\theta^2-2h_{22}\overline\theta^2
+6h_{12}\abs{v_c}^4+6h_{21}\abs{u_c}^4.
\end{aligned}
\]
Taking imaginary parts gives
\begin{equation}\label{eq:app_imaginary_parts_cubic}
\Im\pair{W^{\mathrm s}}=(h_{11}-h_{22})\,2r_c\chi,
\qquad
\Im\pair{W^{\mathrm m}}=-2(h_{11}-h_{22})\,2r_c\chi .
\end{equation}
Substituting into~\eqref{eq:app_projection} and recalling $\mathcal D_3=h_{11}-h_{22}$ proves~\eqref{eq:ham_cubic_self_identity}--\eqref{eq:ham_cubic_identity}.

\subsection{The cascade coefficient}

Let $\mathcal B$ denote the symmetric bilinear map associated with $F_2$, so that $F_2(U)=\mathcal B(U,U)$ and, for spatial fields $X,Y$, $\tfrac12D^2G(0)(X,Y)=\mathcal K\bigl(\mathcal B(X,Y)\bigr)$.
From~\eqref{eq:general_equivariant_flux},
\begin{equation}\label{eq:app_bilinear}
\mathcal B(\mathbf x,\mathbf y)
=\begin{pmatrix}
\tfrac12g_{11}(x_1y_2+x_2y_1)\\[2pt]
\tfrac12(g_{21}x_1y_1+g_{22}x_2y_2)
\end{pmatrix}.
\end{equation}

\emph{Step 1: only the second harmonic contributes.} By~\eqref{eq:cubic_reduced_field} the cascade term is $D^2G(0)(\psi_c,h_2)$, so $AB\overline B$ arises by pairing one linear leg of $\psi_c$ with one quadratic monomial of $h_2$, in the three splittings $A\times B\overline B$, $B\times A\overline B$ and $\overline B\times AB$.
The first two involve $B\overline B$ and $A\overline B$, of output mode $0$ by~\eqref{eq:quadratic_mode_frequency_table}, so $\Phi_{B\overline B}=\Phi_{A\overline B}=0$ on the mean-zero phase space by~\eqref{eq:zero_mode_vanishing}.
Only the third splitting survives:
\begin{equation}\label{eq:app_cascade_split}
[AB\overline B]\bigl(DG_2(\psi_c)h_2\bigr)
=2\,\mathcal K\Bigl(\mathcal B\bigl(\overline{\mathbf b},\Phi_{AB}\bigr)e^{im_cx}\Bigr),
\end{equation}
the output mode being $-m_c+2m_c=m_c$.
Thus only the unshifted $AB$ resolvent enters.

\emph{Step 2: the second-harmonic coefficient.} By~\eqref{eq:quadratic_forcing_definition} the $AB$ forcing is $\hat g_{AB}=2i\ksym_2\,\mathcal B(\mathbf a,\mathbf b)$, the factor $2$ arising from the two mixed terms in the bilinear expansion and $i\ksym_2$ being the symbol of $\mathcal K$ at the output mode $2m_c$.
From~\eqref{eq:app_bilinear} and~\eqref{eq:center_linear_parametrization},
\begin{equation}\label{eq:app_V}
\mathcal B(\mathbf a,\mathbf b)
=\begin{pmatrix}
\tfrac12g_{11}\bigl(u_c\overline{v_c}-\overline{u_c}v_c\bigr)\\[2pt]
\tfrac12\bigl(g_{22}\abs{v_c}^2-g_{21}\abs{u_c}^2\bigr)
\end{pmatrix}
=\begin{pmatrix}i\Gamma\\ \Delta\end{pmatrix}, \qquad
\begin{aligned}
\Gamma&:=g_{11}\chi,\\
\Delta&:=\tfrac12\bigl(g_{22}\abs{v_c}^2-g_{21}\abs{u_c}^2\bigr),
\end{aligned}
\end{equation}
with $\Gamma,\Delta\in\mathbb R$.
Since $\beta_{AB}=0$, \eqref{eq:mixed_second_harmonic} and~\eqref{eq:inverse_symbol_parity} give
\begin{equation}\label{eq:app_Phi_explicit}
\Phi_{AB}
=-M_{2m_c}^{-1}\hat g_{AB}
=-2i\ksym_2
\begin{pmatrix}\varrho_{11}&i\varrho_{12}\\ i\varrho_{21}&\varrho_{22}\end{pmatrix}
\begin{pmatrix}i\Gamma\\ \Delta\end{pmatrix}
=\begin{pmatrix}\varphi_1\\ i\varphi_2\end{pmatrix},
\end{equation}
where
\begin{equation}\label{eq:app_phi}
\varphi_1=2\ksym_2\bigl(\varrho_{11}\Gamma+\varrho_{12}\Delta\bigr)\in\mathbb R,
\qquad
\varphi_2=-2\ksym_2\bigl(\varrho_{22}\Delta-\varrho_{21}\Gamma\bigr)\in\mathbb R.
\end{equation}
In particular, $\overline{\Phi_{AB}}=R_0\Phi_{AB}$, which is~\eqref{eq:vertex_Psi_conjugation}.

\emph{Step 3: the outer interaction.}
By~\eqref{eq:app_cascade_split} and~\eqref{eq:app_projection},
$\Re\hat c_{1,2}=-\frac{\ksym_1}{2r_c}\Im\pair{W^{\mathrm c}}$ with
$W^{\mathrm c}=2\mathcal B(\overline{\mathbf b},\Phi_{AB})$.
From~\eqref{eq:app_bilinear}, \eqref{eq:center_linear_parametrization}
and~\eqref{eq:app_Phi_explicit},
\[
W^{\mathrm c}
=\begin{pmatrix}
g_{11}\bigl(v_c\varphi_1-iu_c\varphi_2\bigr)\\[2pt]
-g_{21}u_c\varphi_1+ig_{22}v_c\varphi_2
\end{pmatrix},
\]
and therefore
\begin{equation}\label{eq:app_pair_cascade}
\pair{W^{\mathrm c}}
=\varphi_1\bigl(g_{11}\abs{v_c}^2-g_{21}\abs{u_c}^2\bigr)
+i\varphi_2\bigl(g_{22}\overline\theta-g_{11}\theta\bigr).
\end{equation}
The first summand is real, while $\theta=r_c+i\chi$ gives
\[
g_{22}\overline\theta-g_{11}\theta
=(g_{22}-g_{11})r_c-i(g_{11}+g_{22})\chi,
\]
so that
\begin{equation}\label{eq:app_imaginary_part_cascade}
\Im\pair{W^{\mathrm c}}
=\varphi_2(g_{22}-g_{11})r_c
=\varphi_2\,\mathcal D_2\,r_c .
\end{equation}

Substituting~\eqref{eq:app_phi} into~\eqref{eq:app_imaginary_part_cascade} and then into~\eqref{eq:app_projection}, and inserting $\Gamma$ and $\Delta$ from~\eqref{eq:app_V}, gives
\[
\Re\hat c_{1,2}
=-\frac{\ksym_1}{2r_c}\varphi_2\mathcal D_2r_c
=\ksym_1\ksym_2\,\mathcal D_2\bigl(\varrho_{22}\Delta-\varrho_{21}\Gamma\bigr)
=\frac{\ksym_1\ksym_2}{2}\,\mathcal D_2\,E,
\]
with $E$ as in~\eqref{eq:E_of_L}, which is~\eqref{eq:ham_quadratic_identity}.

\section{Proof of Lemma~\ref{lem:witness}}\label{app:converse}

\begin{proof}[Proof of Lemma~\ref{lem:witness}]
The symbol is $M_k(\lambda)=\begin{psmallmatrix}t+a&irk\\ ir^{-1}k&t-a\end{psmallmatrix}$ with $t=\lambda-\nu(k^2-m_c^2)^N$, so $\tr M_k(\lambda)=2t$ and $\det M_k(\lambda)=t^2-a^2+k^2$.
For Assumption~\ref{ass:H}, at $\lambda=0$, $\tr M_{m_c}=0$ and $\partial_\lambda\tr M_{m_c}=2$.
Moreover, for every $k\ne0$, $\det M_k(0)=\nu^2(k^2-m_c^2)^{2N}+k^2-a^2>0$ because $k^2\ge1>a^2$.
This is where the discreteness of the spectrum is used.
These facts prove~\ref{H1} and~\ref{H2}.
Since $N$ is even, $\tr M_k(0)=-2\nu(k^2-m_c^2)^N<0$ for $k\ne\pm m_c$, which is~\ref{H3}.
Finally, $\det M_k(0)\sim\nu^2k^{4N}$ and $\abs{\tr M_k(0)}\sim2\nu k^{2N}$ give~\ref{H4}.
For Assumption~\ref{ass:D}, conjugation by $R_0$ gives~\ref{E}.
Evenness of $N$ makes the principal part $-\nu\partial_x^{2N}$ on both diagonal entries, while the off-diagonal terms have order $1\le2N-1$ and $\lambda$ enters at order zero, which give~\ref{D} and~\ref{P} with $m_L=N$.
The inequality $2N-1\ge r_G$ is~\ref{S4}.

At $k=m_c$ and $\lambda=0$ the symbol is $\begin{psmallmatrix}a&irm_c\\ ir^{-1}m_c&-a\end{psmallmatrix}$, with $\omega_c=\sqrt{m_c^2-a^2}>0$ and, normalizing $v_c=1$, second row $ir^{-1}m_cu_c-a=i\omega_c$, so $u_c=r(\omega_c-ia)/m_c$ and
\[
\chi=\Im(u_c\overline{v_c})=-\frac{ar}{m_c},
\qquad
\abs{u_c}^2=\frac{r^2(\omega_c^2+a^2)}{m_c^2}=r^2 .
\]
At $k=2m_c$, with $d=\nu(3m_c^2)^N$,
\[
M_{2m_c}=\begin{pmatrix}-d+a&2irm_c\\ 2ir^{-1}m_c&-d-a\end{pmatrix},
\qquad
\det M_{2m_c}=d^2-a^2+4m_c^2>0,
\]
and inverting gives the entries $\varrho_{21}$, $\varrho_{22}$ of~\eqref{eq:witness_invariants} in the notation of~\eqref{eq:inverse_symbol_parity}.
\end{proof}

\section{Proof of Proposition~\ref{prop:boussinesq_ray}}\label{app:boussinesq}

\begin{proof}[Proof of Proposition~\ref{prop:boussinesq_ray}]
Write $c_\beta^2=c_0^2+\beta$ and normalize the critical eigenvector by its second component.
At onset the symbol of Example~\ref{ex:boussinesq} and~\eqref{eq:critical_frequency} give, writing $\omega=\omega_c$,
\[
M_m=\begin{pmatrix}-am^4&im\\ im(c_0^2+\beta m^2)&\epsilon m^2-\alpha_4m^4\end{pmatrix},
\qquad
\tr M_1=0,
\qquad
\omega^2=\det M_1=c_\beta^2-a^2,
\]
and, in the notation of~\eqref{eq:qc},
\[
\mathbf q=(u_c,1)^T,
\qquad
u_c=\frac i{i\omega+a},
\qquad
\abs{u_c}^2=\frac1{c_\beta^2},
\qquad
r_c=\Re u_c=\frac\omega{c_\beta^2}.
\]
Here $G_2(U)=\bigl(0,-\tfrac\alpha2\partial_x(v^2)\bigr)^T$ for $U=(v,p)$, so~\eqref{eq:quadratic_forcing_definition} and~\eqref{eq:self_second_harmonic} give
\[
\hat g_{AA}=\begin{pmatrix}0\\-i\alpha u_c^2\end{pmatrix},
\qquad
\Phi_{AA,1}=\frac{2\alpha u_c^2}\Theta,
\qquad
\Theta:=\det(2i\omega I-M_2),
\]
the second because $\hat g_{AA}$ has no first component and the $(1,2)$ entry of $(2i\omega I-M_2)^{-1}$ is $(M_2)_{12}/\Theta=2i/\Theta$.
In~\eqref{eq:cascade_coefficient_extraction} only the pairing of $\overline A\,\overline{\mathbf q}e^{-ix}$ with $A^2\Phi_{AA}e^{2ix}$ survives, since $\Phi_{A\overline A}=0$ by~\eqref{eq:zero_mode_vanishing}, so with the adjoint~\eqref{eq:ham_adjoint} and $S\overline{\mathbf q}=(1,\overline{u_c})^T$,
\[
\zeta=\hat c_{1,1}
=\frac{(S\overline{\mathbf q})^T}{2r_c}\begin{pmatrix}0\\-i\alpha\overline{u_c}\,\Phi_{AA,1}\end{pmatrix}
=-\frac{i\alpha\overline{u_c}^{\,2}\Phi_{AA,1}}{2r_c}
=-\frac{i\alpha^2\abs{u_c}^4}{r_c\Theta}
=-\frac{i\alpha^2}{c_\beta^2\,\omega\,\Theta}.
\]
Expanding $\Theta$ and eliminating $\omega^2=c_\beta^2-a^2$,
\[
\Theta=12\beta+12a(16\alpha_4-5a)+24i\omega\epsilon,
\qquad
\zetaR=\Re\zeta=-\frac{\alpha^2\,\Im\Theta}{c_\beta^2\,\omega\,\abs\Theta^2}
=-\frac{24\alpha^2\epsilon}{c_\beta^2\abs\Theta^2},
\]
and $\abs\Theta^2=144\bigl([\beta+a(16\alpha_4-5a)]^2+4\epsilon^2\omega^2\bigr)$ gives the closed form
\begin{equation}\label{eq:boussinesq_zeta}
\zetaR=-\frac{\alpha^2\,\epsilon}{6c_\beta^2\bigl([\beta+a(16\alpha_4-5a)]^2+4\epsilon^2(c_\beta^2-a^2)\bigr)},
\end{equation}
which is negative for every such $(a,\alpha_4)$.
Since $a(16\alpha_4-5a)=O(\epsilon^2)$, the bracket is $\beta^2+O(\epsilon^2)$ when $\beta>0$ and $4c_0^2\epsilon^2\bigl(1+O(\epsilon^2)\bigr)$ when $\beta=0$, which gives the two expansions with $C_\beta=\alpha^2/(6c_\beta^2\beta^2)$ and $C_0=\alpha^2/(24c_0^4)$.
At $\epsilon=0$ the block $M_m$ has eigenvalues $\pm i\omega_m$ with $\omega_m^2=\det M_m=m^2\bigl(c_\beta^2+\beta(m^2-1)\bigr)$, so the detuning $\delta=2\omega_1-\omega_2$ vanishes exactly when $\beta=0$, which is the detuning claim of~\ref{B1} and~\ref{B2}.
\end{proof}

\bibliographystyle{abbrv}
\bibliography{hamO2}

\end{document}